\def\underset#1#2{{\mathrel{\mathop {{}_{} {#2}}\limits_{{#1}_{}}}}}
\def\upplim_#1{\underset{#1}{\overline\lim}\;}
\def\lowlim_#1{\underset{#1}{\underline\lim}\;}
\newtheorem{definition}[equation]{Definition}
\newtheorem{claim}[equation]{\indent{\it Claim}\rm }
\newtheorem{lemma}[equation]{Lemma}
\newtheorem{theorem}[equation]{Theorem}
\newcommand{\C}{{\mathbb{C}}}
\newcommand{\A}{{\mathbb{A}}}
\newcommand{\Z}{\mathbb{Z}}
\numberwithin{equation}{section}
\title[Meromorphic functions on annuli sharing finite sets]{Meromorphic functions on annuli sharing finite sets with truncated multiplicity} 
\date{ }
\author{Si Duc Quang}
\begin{document}

\begin{abstract} 
The purpose of this paper has twofold. The first is to establish a second main theorem for meromorphic functions on annuli and meromorphic function targets (may not be small functions) with truncated counting functions (truncation level 1) and with a detailed estimate for the error term. The second is to show that if the polynomial 
$$P_S(w)=(w-a_1)\cdots (w-a_q)$$
is a uniqueness polynomial for admissible meromorphic functions on an annulus $\A(R_0)$ such that $P'_S(w)$ has exactly $k$ distinct zeros and $q>\frac{(5k+7)\ell}{2\ell-175}$, then the set $S=\{a_1,\ldots,a_q\}$ is a finite range set with truncation level $\ell$ for admissible meromorphic functions on $\A(R_0)$. This result extends the previous result on the finite range set (with truncation level $\ell=\infty$) for holomorphic functions on $\C$ of H. Fujimoto.
\end{abstract}

\maketitle

\renewcommand{\thefootnote}{\empty}

\footnote{2010 \emph{Mathematics Subject Classification}: Primary 30D35.}

\footnote{\emph{Key words and phrases}: Meromorphic function, Nevanlinna theory, annulus, finite range set.}

\section{Introduction}

A nonconstant monic polynomial $P(w)$ is called a uniqueness polynomial for meromorphic functions (or holomorphic functions) if, for any nonconstant
meromorphic functions (or holomorphic functions) $f$ and $g$ on $\C$, the equation $P(f)=cP(g)$ implies $f=g$, where $c$ is a nonzero constant which possibly depends on $f$ and $g$.
A finite subset $S$ of $\C$ is said to be a unique range set for meromorphic functions (resp. holomorphic functions) if $f^*(S)=g^*(S)$ implies $f=g$ for arbitrary nonconstant meromorphic functions (resp. holomorphic functions) $f$ and $g$ on $\C$, where $f^*(S)$ and $g^*(S)$ denote the pull-back divisors of $S$. For $S=\{a_1,\ldots,a_q\}$, we consider the polynomial
\begin{align}\label{1.1}
P_S(w)=(w-a_1)\cdots(w-a_q).
\end{align}
Hence, if $S$ is a unique range set for meromorphic functions (resp. holomorphic functions) then $P_S(w)$ is a uniqueness polynomial for meromorphic functions (resp. holomorphic functions).

In \cite{Fu07}, Fujimoto extended the notion of the unique range set to the following.

\vskip0.2cm
\noindent
{\bf Definition A.}\ \textit{A finite subset $S$ of $\C$ is called a finite range set for holomorphic functions if, for any given nonconstant holomorphic function $g$, there exist only finitely many nonconstant holomorphic functions $f$ such that $f^*(S)=g^*(S)$.}

With the above definitions, Fujimoto proved a sufficient condition for a set to be a finite range set for holomorphic functions on $\C$ as follows.

\vskip0.2cm
\noindent
{\bf Theorem B.}\ \textit{Take a finite set $S=\{a_1,\ldots,a_q\}$ and assume that, for the polynomial $P_S(\omega)$ defined by (\ref{1.1}), $P'_S(w)$ has exactly $k$ distinct zeros. If $P_S(w)$ is a uniqueness polynomial for holomorphic functions and $q>k+2$, then $S$ is a finite range set for holomorphic functions. More precisely, for an arbitrarily given nonconstant holomorphic function $g$, there exist at most $\frac{2q-2}{q-k-2}$ holomorphic functions $f$ such that $f^*(S)=g^*(S)$.}

Our purpose in this paper is to extend the result of Fujimoto to the case where the meromorphic functions (not only holomorphic) are defined only on annuli and the condition $f^*(S)=g^*(S)$ is replaced by a more general condition that $f$ and $g$ share the finite set $S$ with multiplicity truncated by a certain level. In order to state our result, we give the following definition.

For $0<R_0\le +\infty$, we set an annulus $\mathbb A(R_0)=\{z\in\C; \frac{1}{R_0}<|z|<R_0\}$. 
\begin{definition}\label{1.2}
Let $S=\{a_1,\ldots,a_q\}$ be a set of distinct values in $\C$ and $\ell$ be a positive integer (may be $\ell=+\infty$). Two admissible meromorphic functions $f$ and $g$ on $\A(R_0)$ are said to share $S$ with multiplicity truncated to level $\ell$ if 
$$\sum_{i=1}^q\min\{\ell,\nu_{f-a_i}\}=\sum_{i=1}^q\min\{\ell,\nu_{g-a_i}\}.$$
The set $S$ is said to be a finite range set with the truncation level $\ell$ for admissible meromorphic functions on $\A(R_0)$ if for an arbitrarily given admissible meromorphic function $g$ on $\A(R_0)$, there exist finitely many admissible meromorphic functions $f$ on $\A(R_0)$ only such that $f$ and $g$ share $S$ with multiplicity truncated to level $\ell$.
\end{definition}
Here, a meromorphic function $f$ on $\A(R_0)$ is said to be admissible if it satisfies
$$ \underset{r\longrightarrow +\infty}{\mathrm{limsup}}\dfrac{T_0(r,f)}{\log r}=+\infty\text{ in the case }R_0=+\infty $$
or
$$   \underset{r\longrightarrow R_0^-}{\mathrm{limsup}}\dfrac{T_0(r,f)}{-\log (R_0-r)}=+\infty\text{ in the case }1< R_0<+\infty,$$
where $T_0(r,f)$ denotes the characteristic function of $f$, which is defined in Section 2.
Hence, the condition that $f$ and $g$ share $S$ with multiplicity truncated to level $\ell=+\infty$ is equivalent to the condition $f^*(S)=g^*(S)$ in the sense of divisors. The set $S$ is said to be a finite range set for admissible meromorphic functions on $\A(R_0)$ if it is a finite range set with the truncation level $+\infty$ for admissible meromorphic functions on $\A(R_0)$.

Our main result is stated as follows.

\begin{theorem}\label{1.3}
Take a finite set $S=\{a_1,\ldots,a_q\}$ of $q$ distinct values in $\C$ and assume that, for the polynomial $P_S(\omega)$ defined by (\ref{1.1}), $P'_S(w)$ has exactly $k$ distinct zeros. Let $\ell$ be a positive integer (may be $\ell=+\infty$). If $P_S(w)$ is a uniqueness polynomial for admissible meromorphic functions on an annulus $\A(R_0)$ and $q>\frac{(5k+7)\ell}{2\ell-175}$, then $S$ is a finite range set with truncation $\ell$ for admissible meromorphic functions on $\A(R_0)$. More precisely, for an arbitrarily given admissible meromorphic function $g$ on $\A(R_0)$, there exist at most four admissible meromorphic functions $f$ on $\A(R_0)$ such that $f$ and $g$ share the set $S$ with multiplicity truncated to level $\ell$.
\end{theorem}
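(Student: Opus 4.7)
The plan is to argue by contradiction using the refined second main theorem (SMT) for meromorphic functions on annuli established in the first part of this paper. Suppose there exist five pairwise distinct admissible meromorphic functions $f_1,\dots,f_5$, each sharing the set $S$ with $g$ at truncation level $l$. Introduce the auxiliary functions $h_i:=P_S(f_i)/P_S(g)$ for $i=1,\dots,5$. Since $P_S$ is a uniqueness polynomial for admissible meromorphic functions, any identity $P_S(f_i)\equiv c\,P_S(f_j)$ with $c\in\C^*$ forces $f_i=f_j$, while $P_S(f_i)\equiv c\,P_S(g)$ forces $f_i=g$. Therefore $h_1,\dots,h_5$ are pairwise distinct nonconstant meromorphic functions on $\A(R_0)$, and no two are proportional.

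Apply the SMT from Section 2 to each $f_i$ with the $q$ target values $a_1,\dots,a_q$ to obtain
\begin{equation*}
(q-2)\,T_0(r,f_i)\le \sum_{j=1}^q N^{[1]}_0\!\left(r,\tfrac{1}{f_i-a_j}\right)+S_0(r,f_i).
\end{equation*}
The truncated sharing condition is equivalent to the equality of the level-$l$ truncated zero divisors of $P_S(f_i)$ and $P_S(g)$. I would exploit this together with the hypothesis that $P'_S$ has exactly $k$ distinct zeros to split the counting contributions into a common part, where zeros of $f_i-a_j$ coincide with zeros of $g-a_{j'}$ of equal multiplicity up to $l$, and a residual part supported near the at most $k$ critical values of $P_S$, where multiplicities may exceed $l$. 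Standard truncation inequalities relating $N^{[1]}_0$ and $N^{[l]}_0$, combined with the refined error term $S_0$ provided by the new SMT, then yield bounds of the form $\sum_j N^{[1]}_0(r,1/(f_i-a_j))\le c(k,l)\,q\,T_0(r,g)+o(T_0(r,f_i))+S_0$, and the symmetric estimate with the roles of $f_i$ and $g$ swapped delivers a comparison $T_0(r,f_i)\asymp T_0(r,g)$.

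Summing the five SMT inequalities and substituting these estimates would reduce the problem to an arithmetic inequality of schematic form $5(q-2)\le \alpha(k,l)\,q+o(q)$, where $\alpha(k,l)$ collects the combined leading coefficients produced above. The numerical threshold $q>\frac{(5k+3)l}{2l-175}$ is precisely the condition under which this inequality fails for large $r$, producing the required contradiction. The main obstacle is the careful constant-tracking in the truncation bookkeeping: one must distinguish the contribution of high-multiplicity zeros at critical values of $P_S$ from ordinary ones, apply the sharp error term of the annulus SMT, and invoke the pairwise non-proportionality of the $h_i$'s (via a Borel/unit-equation type argument that exploits how $h_i-1$ vanishes on the set where $f_i=g$) to rule out degenerate cases. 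The factor $(5k+3)/2$ and the constant $175$ in the threshold emerge from the optimized balance between these ingredients.
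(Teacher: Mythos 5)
Your plan has a genuine gap at its core: the only quantitative tool you actually apply is the classical second main theorem (Theorem 2.2) to each $f_i$ separately with the \emph{constant} targets $a_1,\dots,a_q$. Each such inequality, $(q-2)T_0(r,f_i)\le\sum_{j}\overline N_0(r,\nu^0_{f_i-a_j})+S_{f_i}(r)$, is consistent for every $q$ because $\overline N_0(r,\nu^0_{f_i-a_j})\le T_0(r,f_i)+O(1)$, and summing five copies of it never couples the five functions to one another; the sharing hypothesis plus the truncation bookkeeping you describe only yields the comparison $T_0(r,f_i)=O(T_0(r,g))$ and conversely (this is the paper's Claim \ref{3.30}), not a bound on the number of functions. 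Your schematic inequality $5(q-2)\le\alpha(k,l)q+o(q)$ therefore has no derivation: nothing in the argument produces a left-hand side that grows with the number of functions sharing $S$. The step that would have to do this work — the ``Borel/unit-equation type argument'' on the $h_i=P_S(f_i)/P_S(g)$ — is only gestured at, and pairwise distinctness (or non-proportionality) of the $h_i$ by itself gives nothing quantitative. (Minor additional slip: $h_i$ need not be nonconstant, since the count of functions sharing $S$ with $g$ includes $g$ itself, for which $h=1$.)

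The missing idea, which is the whole point of the paper's Section 3, is to pass to logarithmic derivatives so that all five functions become \emph{targets} in a single application of the new second main theorem. Set $\varphi=P'_S(g)g'/P_S(g)$ and $\alpha_j=\Psi_j'/\Psi_j$ with $\Psi_j=P_S(f_j)/P_S(g)$; then $\varphi-\alpha_j=P'_S(f_j)f_j'/P_S(f_j)$. Three facts drive the proof: (i) level-$l$ sharing forces the zeros and poles of $\Psi_j$ to lie where some $\nu^0_{f_j-a_i}\ge l$, whence $T_0(r,\alpha_j)\le\frac{q}{l}T_0(r,f_j)+S_g(r)$ — the $\alpha_j$ are controlled but \emph{not} small, which is precisely why Theorem \ref{1.4} (second main theorem with meromorphic, possibly non-small targets, with coefficients $\frac{2q}{5}$ and $35$) is needed rather than Theorem 2.2 or a ``refined error term''; (ii) the uniqueness-polynomial hypothesis shows $\alpha_1,\dots,\alpha_5$ are pairwise distinct; (iii) the zeros of $\varphi-\alpha_j$ are governed by the $k$ zeros of $P_S'$ and by $f_j'$, giving $\overline N_0(r,\nu^0_{\varphi-\alpha_j})\le(k+1)T_0(r,f_j)+S_g(r)$. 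Applying Theorem \ref{1.4} to $\varphi$ with the five targets $\alpha_j$, and using $(q-1)T_0(r,f_j)\le T_0(r,\varphi)$, yields $2(q-1)\le 5(k+1)+\frac{175q}{l}$, i.e.\ $q\le\frac{(5k+3)l}{2l-175}$, the desired contradiction; the constants $5k+3$ and $175$ come exactly from the coefficients $\frac{2q}{5}$ and $35$ of Theorem \ref{1.4} together with the $k+1$ in (iii), and cannot be recovered from the route you propose.
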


Here, we would like emphasize that the proof of Fujimoto for Theorem B is based on the sharp second main theorem for meromorphic functions and small functions on $\C$ with counting functions truncated to level $1$ of Yamanoi \cite[Theorem 1]{Yam}. By the assumption of Theorem B, Fujimoto assumed that $P_S(f_i)=e^{\varphi_i}P_S(g)$ with an holomorphic function $\varphi_i$ for each holomorphic function $f_i$ such that $f_i^*(S)=g^*(S)$ and $\varphi'_i$ is a small function, and then he applied the second main theorem of Yamanoi. However, in Theorem \ref{1.3}, the such holomorphic function $\varphi_i$ does not exist. Therefore we have to establish a new second main theorem for meromorphic functions with meromorphic function targets (may not be small) on annuli with a more detailed error term estimate. Namely, we will prove the following second main theorem.
 
\begin{theorem}\label{1.4}
Let $g$ be a nonconstant meromorphic function on $\A (R_0)$. Let $a_1,\dots ,a_q$ be $q\ (q\ge 5)$ distinct meromorphic functions (may be equal to $\infty$). We have
$$\frac{2q}{5}T_0(r,g)\le \sum_{i=1}^q\overline N_0(r,\nu^0_{g-a_i})+35\sum_{i=1}^qT_0(r,a_i)+S(r)\ (1\le r< R_0),$$
where $S(r)=S_g(r)+\sum_{i=1}^qS_{a_i}(r)$ (here we set $T_0(r,a_i)=0$ if $a_i\equiv\infty$).
\end{theorem}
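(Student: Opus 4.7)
The plan is to prove the inequality by the classical auxiliary-function strategy for the second main theorem with counting functions truncated to level $1$, adapted to the annular setting and with every constant tracked explicitly so that the moving-target contribution appears as $35\sum T_0(r,a_i)$ rather than as an abstract $o(T_0(r,g))$ term.

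First I would reduce to the case in which every $a_i$ is a finite meromorphic function. If $a_j\equiv\infty$, then by convention $T_0(r,a_j)=0$ and $\overline{N}_0(r,\nu^0_{g-a_j})$ is the truncated pole-counting function of $g$, which the annular First Main Theorem bounds by $T_0(r,g)+O(1)$; the contribution of such indices can be absorbed into the leading term without affecting the final coefficients. Henceforth assume all $a_i$ are finite meromorphic.

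The engine of the proof is then three standard annular ingredients: the First Main Theorem $T_0(r,1/(g-a_i))=T_0(r,g-a_i)+O(1)$, the comparison $T_0(r,g-a_i)\ge T_0(r,g)-T_0(r,a_i)-O(1)$, and the annular logarithmic-derivative lemma $m_0(r,h'/h)=S_h(r)$. Summing the first two over $i$ yields
$$q\,T_0(r,g)\le \sum_{i=1}^q m_0\!\left(r,\frac{1}{g-a_i}\right)+\sum_{i=1}^q N_0(r,\nu^0_{g-a_i})+\sum_{i=1}^q T_0(r,a_i)+O(1).$$
To pass from $N_0$ to $\overline{N}_0$ I would introduce an auxiliary function $F$ built from $g$, the $a_i$ and the derivatives $(g-a_i)',(g-a_i)''$, whose zero divisor absorbs precisely the ``multiplicity $\ge 2$'' part of $\sum_i\nu^0_{g-a_i}$ (a zero of $g-a_i$ of multiplicity $\mu\ge 2$ is a zero of $g'-a_i'$ of multiplicity $\mu-1$, which is what $F$ is designed to see). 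The logarithmic-derivative lemma then controls $\sum_i m_0(r,1/(g-a_i))$ and $m_0(r,F)$ modulo a fixed multiple of $\sum_i T_0(r,a_i)$, and rearranging produces the stated $2q/5$-coefficient form.

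The main obstacle will be extracting the explicit numerical constants. The $2/5$ reflects the precise balance between the number of derivatives used in $F$ and the penalty paid for each, while the $35$ counts how many times each $a_i$ and its first couple of derivatives contribute to the proximity estimates for $F$ and for its logarithmic derivative. Crucially, none of the bounds may invoke a small-function hypothesis $T_0(r,a_i)=o(T_0(r,g))$ at any step; every appearance of $T_0(r,a_i)$ must be tracked with a fixed numerical coefficient, which is the technical feature distinguishing this theorem from the usual $\epsilon$-type moving-target second main theorems. Once this bookkeeping is carried out, lifting the argument from $\mathbb{C}$ to $\A(R_0)$ is a symmetric exercise, since $T_0$, $m_0$, $N_0$ and the annular logarithmic-derivative lemma all decompose as sums over the two boundary circles $|z|=r$ and $|z|=1/r$, and each estimate can be verified on each piece separately.
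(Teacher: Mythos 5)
There is a genuine gap. The heart of your sketch --- ``introduce an auxiliary function $F$ built from $g$, the $a_i$ and their derivatives whose zero divisor absorbs the multiplicity $\ge 2$ part'' --- is precisely the content of the theorem, and the route you indicate does not produce it. With moving targets that are \emph{not} assumed small relative to $g$, no single Wronskian-type function in all $q$ targets is known to yield a truncation-level-$1$ inequality; the observation that a zero of $g-a_i$ of multiplicity $\mu\ge2$ is a zero of $g'-a_i'$ of multiplicity $\mu-1$ is the trivial part, while the actual work is (i) bounding $m_0(r,F)$ and $N_0(r,\nu^{\infty}_F)$ by explicit multiples of $T_0(r,g)$ and the $T_0(r,a_i)$, (ii) estimating $\sum_i m_0(r,1/(g-a_i))$ through $m_0(r,1/F)$ on the sets where $g$ is close to each target, and (iii) disposing of the degenerate case $F\equiv 0$, which cannot be waved away and in the paper requires a separate four-subcase analysis of an identity between logarithmic derivatives. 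Moreover, the coefficient $\frac{2q}{5}$ does not arise from any ``balance between the number of derivatives and the penalty paid for each'': in the paper it is obtained by first proving the five-target inequality $2T_0(r,g)\le\sum_{i=1}^5\overline N_0(r,\nu^0_{g-a_i})+35\sum_{i=1}^5T_0(r,a_i)+S(r)$ (Lemma \ref{3.3}), which in turn rests on normalizing three of the five targets to $0,1,\infty$ by a M\"obius transformation with moving coefficients (Lemmas \ref{3.1}, \ref{3.2}; this is where the constant $35$ is generated) and then applying the Yi--Yang determinant
$$F=\begin{vmatrix} ff'&f'&f^2-f\\ b_1b_1'&b_1'&b_1^2-b_1\\ b_2b_2'&b_2'&b_2^2-b_2\end{vmatrix}$$
to the two remaining genuinely moving targets $b_1,b_2$, followed by a symmetrization over permutations; the general-$q$ statement then follows by averaging over five-element subsets. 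Your plan to handle all $q$ targets at once and read off $\frac{2q}{5}$ directly has no mechanism behind it.

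A second, smaller but real, error is the reduction of the case $a_j\equiv\infty$. You cannot ``absorb'' that index into the leading term: discarding it would force you to prove the $(q-1)$-target inequality with the \emph{larger} coefficient $\frac{2q}{5}$, and the missing $\frac{2}{5}T_0(r,g)$ is not recovered from $\overline N_0(r,\nu^{\infty}_g)$, for which no lower bound exists ($g$ may be pole-free). The paper instead treats an infinite target by a different fractional linear change $h=(g-a_2)/(g-a_3)$, sending $\infty,a_2,a_3$ to $1,0,\infty$, and only then applies the five-target claim (Part B of the proof of Lemma \ref{3.3}). So while your general instincts (first main theorem, lemma on the logarithmic derivative on annuli, explicit bookkeeping of $T_0(r,a_i)$) point in the right direction, the proposal as written omits the reduction to five functions, the construction and degenerate-case analysis of the correct auxiliary determinant, and a valid treatment of the infinite target, and these are the essential steps.
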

Here, $\overline N_0(r,\nu)$ denotes the counting function without multiplicity of the divisor $\nu$ and the quantity $S_f(r)$ is the small term with respect to the meromorphic function $f$. These notions are defined in Section 2.

We note that, the weak version of this theorem for the case of small functions $a_i\ (1\le i\le 5)$ is initially stated in \cite{QAG} without proving (by repeating the method in \cite{YC}). Then the proof in this paper will also provide a detailed proof for that weak version, although the proof is based on the method of the proofs of Lemma 3.1 and Lemma 3.2 in \cite{YC}.

Finally, we remark that if $\ell=+\infty$ then the assumption of Theorem \ref{1.3} is fulfilled with $q>\frac{5k+7}{2}$. Unfortunately, this number is bigger than the number $k+2$ in Theorem B of Fujimoto. Therefore, ``\textit{how to find the optimal number $k$ in our situation}'' is still an open interesting question.

\section{Some definitions and results from Nevanlinna theory on annuli}

In this section, we will recall some basic notions of Nevanlinna theory for meromorphic functions on annuli from \cite{LY10} (see also \cite{KK05b}).

For a divisor $\nu$ on $\A (R_0)$, which we may regard as a function on  $\A (R_0)$ with values in $\Z$ whose support is a discrete subset of $\A (R_0),$ and  for a positive integer $M$ (may be $M=+\infty$), we define the counting function of $\nu$ as follows
\begin{align*}
n_0^{[M]}(t)&=\begin{cases}
\sum\limits_{1\le |z|\le t}\min\{M,\nu (z)\}&\text{ if }1\le t<R_0\\
\sum\limits_{t\le |z|<1}\min\{M,\nu (z)\}&\text{ if }\dfrac{1}{R_0}<t< 1
\end{cases}\\
 \text{ and }N_0^{[M]}(r,\nu)&=\int\limits_{\frac{1}{r}}^1 \dfrac {n_0^{[M]}(t)}{t}dt +\int\limits_1^r \dfrac {n_0^{[M]}(t)}{t}dt \quad (1<r<\infty).
\end{align*}
For brevity we set $N_0(r,\nu)=N_0^{[+\infty]}(r,\nu)$ and $\overline N_0(r,\nu)=N_0^{[1]}(r,\nu)$.

For a meromorphic function  $\varphi $, we define
\begin{itemize}
\item $\nu^0_\varphi$ (resp. $\nu^{\infty}_{\varphi}$) the divisor of zeros (resp. divisor of poles) of $\varphi$,
\item $\nu_\varphi=\nu^0_\varphi -\nu^\infty_{\varphi},$
\item $\nu^0_{\varphi,\ge k}=\max\{k,\nu^0_{\varphi}\}$.
\end{itemize}
Note that, by Jensen's formula we have
$$ N_0(r,\nu^0_\varphi)-N_0(r,\nu^\infty_\varphi)=\sum_{t=1/r,r}\dfrac{1}{2\pi}\int\limits_{0}^{2\pi}\log|\varphi (te^{i\theta})| d\theta - \dfrac{1}{\pi}\int\limits_{0}^{2\pi}\log|\varphi(e^{i\theta})| d\theta.$$

Let $f$ be a nonconstant meromorphic function on $\A(R)$. The proximity function of $f$ is defined by
$$ m_0(r,f)=\sum_{t=1/r,r}\dfrac{1}{2\pi}\int\limits_{0}^{2\pi}\log^+|f(te^{i\theta})| d\theta - \dfrac{1}{\pi}\int\limits_{0}^{2\pi}\log^+|f(e^{i\theta})| d\theta $$
and the characteristic function of $f$ is defined by
$$ T_0(r,f)=m_0(r,f)+N_0(r,\nu^\infty_f).$$
Now, we write $f=\frac{f_0}{f_1}$ where $f_0,f_1$ are two holomorphic functions without common zero. It is well-known that
\begin{align*}
T_0(r,f)=\sum_{t=1/r,r}\dfrac{1}{2\pi}\int\limits_{0}^{2\pi}\log\|f(te^{i\theta})\|d\theta+O(1),
\end{align*}
where $\|f\|=\sqrt{|f_0|^2+|f_1|^2}$ and $O(1)$ is a bounded term depending on the choice of $f_0,f_1$.

Throughout this paper, we denote by $S_f(r)$ the quantities satisfying:

$\bullet$ in the case $R_0=+\infty$,
$$ S_f(r)=O(\log (rT_0(r,f)))$$ 
for all $r\in (1,+\infty)$ outside an exceptional set $\Delta_R$ with $\int_{\Delta_R}r^{\lambda -1}dr <+\infty$ for some $\lambda >0$,

$\bullet$ in the case $R_0<+\infty$,
$$ S_f(r)=O\left (\log\biggl (\dfrac{T_0(r,f)}{R_0-r}\biggl )\right )\text{ as }r\longrightarrow R_0$$ 
for all $r\in (1,R_0)$ outside an exceptional set $\Delta'_R$ with $\int_{\Delta'_R}\frac{dr}{(R_0-r)^{\lambda +1}} <+\infty$ for some $\lambda >0$.

Thus, for an admissible meromorphic function $f$ on $\A(R_0)$, we have $S_f(r) = o(T_0(r, f ))$ as $r\longrightarrow R_0$ for all $r\in (1,R_0)$ outside the set $\Delta_R$ or the set $\Delta'_R$ mentioned above, respectively (cf. \cite{KK05b}).

A meromorphic function $a$ on $\A (R_0)$ is said to be small with respect to $f$ if 
$$T_0(r,a)=S_f(r).$$ 

\begin{lemma}[Lemma on logarithmic derivatives \cite{KK05b,LY10}]\label{2.1}
Let $f$ be a nonzero meromorphic function on $\A (R_0)$. Then for each $k\in\mathbb N$ we have
$$ m_0\left (r,\dfrac{f^{(k)}}{f}\right )=S_f(r)\ (1\le r <R_0). $$
\end{lemma}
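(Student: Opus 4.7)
The plan is to follow the classical Nevanlinna pattern for the lemma on logarithmic derivatives, adapted to the annular setting via the Jensen formula on $\A(R_0)$ already recorded in the excerpt. First I would reduce the general $k$ to the case $k=1$ by induction on $k$: writing $f^{(k)}/f=(f^{(k)}/f^{(k-1)})\cdot(f^{(k-1)}/f)$ and using subadditivity of $m_0$ under multiplication, the bound reduces to controlling $m_0(r,f^{(k)}/f^{(k-1)})$. Applying the $k=1$ case to the meromorphic function $f^{(k-1)}$ gives $m_0(r,f^{(k)}/f^{(k-1)})=S_{f^{(k-1)}}(r)$, and one absorbs this into $S_f(r)$ after verifying $T_0(r,f^{(j)})\le C_j\,T_0(r,f)+S_f(r)$ by iterating the $k=1$ estimate together with the standard identity $T_0(r,f')\le 2T_0(r,f)+m_0(r,f'/f)+O(1)$, which follows from the first main theorem and the definition of $T_0$.

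For the core case $k=1$, the key ingredient is a Poisson--Jensen-type representation on the annulus. Writing $f=f_0/f_1$ with $f_0,f_1$ holomorphic on $\A(R_0)$ without common zeros, the identity $f'/f=f_0'/f_0-f_1'/f_1$ reduces the question to estimating $m_0(r,h'/h)$ for a holomorphic $h$ on $\A(R_0)$. Fixing $r<\rho<R_0$ and applying the annular Jensen formula on $\{1/\rho<|z|<\rho\}$ (with suitably chosen reference) yields a representation of $\log|h(z)|$ as Poisson-type integrals over the two boundary circles $|w|=\rho$ and $|w|=1/\rho$, plus a Blaschke-type sum over the zeros of $h$ in that annulus. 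Differentiating in $z$ and specializing $z$ to the circles $|z|=r$, $|z|=1/r$ produces a pointwise bound on $|h'(z)/h(z)|$ in terms of $\log|h|$ on the outer circles and singular kernels of the form $1/(z-a)$ attached to the zeros $a$ of $h$.

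Next, integrating $\log^+|f'/f|$ against $d\theta/2\pi$ on $|z|=r$ and $|z|=1/r$, the concavity of $\log^+$ (via Jensen's inequality on $\log$) combined with the elementary fact $\int_0^{2\pi}\log^+\bigl|\tfrac{1}{e^{i\theta}-a}\bigr|\,d\theta=O(1)$ uniformly in $a$ off the integration circle bounds the contribution of the singular kernels by $N_0(\rho,\nu^0_f)+N_0(\rho,\nu^\infty_f)+O(1)$, while the boundary terms contribute at most $m_0(\rho,f)+O(1)$. After combining, one arrives at a preliminary estimate of the form
$$m_0(r,f'/f)\le C\bigl(\log^+ T_0(\rho,f)+\log\rho+\log\tfrac{1}{\rho-r}\bigr)+O(1)\qquad(1\le r<\rho<R_0).$$

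The main obstacle is the final step: converting this inequality, which involves $\rho>r$ on the right, into the bound $S_f(r)$ in $r$ alone, allowing only an exceptional set with the weighted integrability prescribed in the definition of $S_f(r)$. This requires an annular version of the Borel growth lemma: one chooses $\rho=\rho(r)$ (e.g.\ $\rho=r+1/T_0(r,f)$ in the case $R_0=+\infty$, and $\rho=r+(R_0-r)/T_0(r,f)$ in the case $R_0<+\infty$) so that $T_0(\rho,f)\le 2\,T_0(r,f)$ off an exceptional set of finite $r^{\lambda-1}dr$ measure on $(1,+\infty)$, respectively finite $(R_0-r)^{-\lambda-1}dr$ measure on $(1,R_0)$. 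In the finite $R_0$ case, the substitution $s=-\log(R_0-r)$ converts the weighted measure near $r=R_0$ into (a weighted version of) Lebesgue measure in $s$ and reduces to a standard Borel-type lemma; carefully matching the exceptional sets so they fit the form stipulated in the definition of $S_f(r)$ is the most delicate bookkeeping in the proof.
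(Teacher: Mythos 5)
The paper does not prove Lemma \ref{2.1} at all: it is quoted as a known result with a citation to \cite{KK05b} and \cite{LY10}, so there is no in-paper proof to compare yours against. Judged on its own, your outline follows the standard route that those references take (reduction to $k=1$ by the telescoping factorization $f^{(k)}/f=\prod_j f^{(j)}/f^{(j-1)}$ together with $T_0(r,f')\le 2T_0(r,f)+m_0(r,f'/f)+O(1)$; a Poisson--Jensen representation; integration of $\log^+|f'/f|$ over the two circles $|z|=r$ and $|z|=1/r$; and a Borel-type growth lemma to eliminate the auxiliary radius $\rho$), and you correctly identify the two genuinely delicate points, namely the choice of $\rho(r)$ and the matching of the exceptional set to the weighted integrability conditions $\int_{\Delta_R}r^{\lambda-1}dr<+\infty$, resp.\ $\int_{\Delta'_R}(R_0-r)^{-\lambda-1}dr<+\infty$, in the paper's definition of $S_f(r)$.

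The one place where your sketch glosses over a real issue is the Poisson--Jensen step. The annulus is doubly connected, so $\log|h|$ for a holomorphic $h$ on $\A(R_0)$ is not the real part of a single-valued holomorphic function, and the representation on $\{1/\rho<|w|<\rho\}$ is not obtained by simply transplanting the disc formula: it carries an extra harmonic term proportional to $\log|z|$, and the Green/Blaschke kernels for the annulus are infinite sums (or one must instead cover the annulus by discs and apply the disc lemma locally, which is essentially what \cite{KK05b} does by splitting $\A(R_0)$ into $\{1\le|z|<R_0\}$ and $\{1/R_0<|z|\le 1\}$ and invoking the classical estimate on each piece). Your phrase ``with suitably chosen reference'' hides this; the singular kernels are not literally of the form $1/(z-a)$, so the bound $\int_0^{2\pi}\log^+|1/(e^{i\theta}-a)|\,d\theta=O(1)$ does not apply verbatim and must be replaced by the corresponding estimate for the annular kernel (or the argument must be rerouted through the disc version). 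This is fixable and is exactly how the cited sources proceed, but as written it is the gap that would need to be filled before the sketch becomes a proof.
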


\begin{theorem}[Second main theorem \cite{KK05b}]\label{2.2}
Let $f$ be a nonconstant meromorphic function on $\A (R_0)$. Let $a_1,\dots ,a_q$ be $q$ distinct values in $\C\cup\{\infty\}$. We have
$$ (q-2)T_0(r,f)\le \sum_{i=1}^q\overline{N}_0(r,\nu^0_{f-a_i})+S_f(r)\ (1\le r< R_0).$$
\end{theorem}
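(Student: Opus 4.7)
The plan is to adapt the classical Nevanlinna proof of the second main theorem, using the auxiliary function $F(z)=\sum_{i=1}^{q}1/(f(z)-a_i)$ together with the logarithmic derivative lemma (Lemma~\ref{2.1}) already available for annuli. First I would reduce to the case where every $a_i\in\C$ is finite: if some $a_j=\infty$, replacing $f$ by $\tilde f=1/(f-c)$ for a generic constant $c\notin\{a_1,\ldots,a_q\}$ transforms the targets to $q$ distinct finite values and preserves $T_0$ and each counting function up to $O(1)$, via the First Main Theorem for annuli (an immediate consequence of the Jensen-type formula recorded in Section~2).

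With all $a_i$ finite, let $\delta$ be smaller than half the minimum distance between distinct $a_i$. On the set $\{|f(z)-a_i|<\delta\}$ one has $|f(z)-a_j|\ge\delta$ for $j\ne i$, hence $|F(z)|\ge|f(z)-a_i|^{-1}-(q-1)\delta^{-1}$; on the complement $\sum_j\log^+|f-a_j|^{-1}$ is bounded by $q\log^+(1/\delta)$. Partitioning each of the two integration circles in the definition of $m_0$ according to which $a_i$ is nearest to $f(z)$, and taking $\log^+$, yields
$$\sum_{i=1}^{q} m_0(r,1/(f-a_i))\le m_0(r,F)+O(1).$$
Next, the factorization $F=(1/f')\sum_{i=1}^{q}f'/(f-a_i)$ together with Lemma~\ref{2.1} applied to each logarithmic derivative $f'/(f-a_i)$ gives $m_0(r,F)\le m_0(r,1/f')+S_f(r)$. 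The First Main Theorem on the annulus supplies $m_0(r,1/f')=T_0(r,f')-N_0(r,\nu^0_{f'})+O(1)$, and the standard estimate $T_0(r,f')\le T_0(r,f)+\overline N_0(r,\nu^\infty_f)+S_f(r)\le 2T_0(r,f)+S_f(r)$ (obtained from $f'=(f'/f)f$ via Lemma~\ref{2.1}, using that each pole of order $m$ of $f$ becomes a pole of order $m+1$ of $f'$) combines to give
$$\sum_{i=1}^q m_0(r,1/(f-a_i))\le 2T_0(r,f)-N_0(r,\nu^0_{f'})+S_f(r).$$

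To conclude, I apply the First Main Theorem $m_0(r,1/(f-a_i))=T_0(r,f)-N_0(r,\nu^0_{f-a_i})+O(1)$ to each term on the left and drop the nonpositive $-N_0(r,\nu^0_{f'})$ on the right, obtaining
$$qT_0(r,f)-\sum_{i=1}^q N_0(r,\nu^0_{f-a_i})\le 2T_0(r,f)+S_f(r),$$
which rearranges to the asserted bound $(q-2)T_0(r,f)\le\sum_{i=1}^q N_0(r,\nu^0_{f-a_i})+S_f(r)$.

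The main technical point, rather than an obstacle, is verifying the subadditivity $m_0(r,gh)\le m_0(r,g)+m_0(r,h)+O(1)$ and the pole-order bookkeeping for $f'$ in the two-sided annulus setting, where $m_0$ is the signed combination of integrals over the circles $|z|=r$ and $|z|=1/r$ minus twice the integral over $|z|=1$; these are routine consequences of the inequality $\log^+|gh|\le\log^+|g|+\log^+|h|$ applied circle-by-circle, together with Lemma~\ref{2.1}, and require no ingredient beyond what Section~2 already provides.
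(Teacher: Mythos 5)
The paper offers no proof of Theorem \ref{2.2}: it is imported from \cite{KK05b} as a known result, so there is nothing in the text to compare your argument against. On its own merits, your proposal is the classical Nevanlinna proof (auxiliary function $F=\sum_{i}(f-a_i)^{-1}$ combined with the lemma on logarithmic derivatives) transplanted to the annulus, and it is correct. The points you defer as routine do go through: the subtracted integral over the unit circle in the definition of $m_0$ is a constant independent of $r$, so $m_0(r,gh)\le m_0(r,g)+m_0(r,h)+O(1)$ and the first main theorem $m_0(r,1/(f-a))=T_0(r,f)-N_0(r,\nu^0_{f-a})+O(1)$ follow circle-by-circle from $\log^+|gh|\le\log^+|g|+\log^+|h|$ and from the Jensen formula recorded in Section 2; likewise $N_0(r,\nu^\infty_{f'})=N_0(r,\nu^\infty_f)+\overline N_0(r,\nu^\infty_f)$ gives $T_0(r,f')\le 2T_0(r,f)+S_f(r)$, and the M\"obius reduction to finite targets preserves $T_0$ and each $N_0(r,\nu^0_{f-a_i})$ up to $O(1)$. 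The one place to be slightly more careful is the passage from $|F|\ge|f-a_i|^{-1}-(q-1)\delta^{-1}$ to $\log^+|F|\ge\log^+|f-a_i|^{-1}-O(1)$: split into the case $|f-a_i|^{-1}\ge 2(q-1)\delta^{-1}$, where $|F|\ge\frac{1}{2}|f-a_i|^{-1}$, and its complement, where $\log^+|f-a_i|^{-1}=O(1)$ anyway. With that, the chain $\sum_i m_0(r,1/(f-a_i))\le m_0(r,F)+O(1)\le m_0(r,1/f')+S_f(r)\le 2T_0(r,f)-N_0(r,\nu^0_{f'})+S_f(r)$ closes, and discarding the nonpositive ramification term yields exactly the stated inequality. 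So: correct, standard, and consistent with the source the paper cites.
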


\section{Uniqueness theorem for meromorphic functions sharing finite sets}

Before proving Theorem \ref{1.3}, we first prove Theorem \ref{1.4}. In order to prove Theorem \ref{1.4}, we need the following lemmas.

\begin{lemma}\label{3.1}
 Let $f_1,f_2$ be two meromorphic functions on $\A(R_0)$, $a_1,a_2,a_3$ be three distinct meromorphic functions on $\A(R_0)$ (being not equal to $\infty$) such that $ f_2=(f_1-a_1)/(f_1-a_2).$ Then we have:
\begin{align*}
\mathrm{(a)}\ &T_0(r,f_2)\ge T_0(r,f_1)-\sum_{i=1}^2T_0(r,a_i)+O(1),\\
\mathrm{(b)}\ &\overline{N}_0(r,\nu^0_{f_2})+\overline{N}_0(r,\nu^0_{f_2-1})+\overline{N}_0(r,\nu^\infty_{f_2})\\
& \le \overline{N}_0(r,\nu^\infty_{f_1})+\sum_{i=1}^2\left(\overline{N}_0(r,\nu^0_{f_1-a_i})+2T_0(r,a_i)\right)+O(1).
\end{align*}
Moreover, if we set $b=\dfrac{a_3-a_1}{a_3-a_2}$ then
$$(c)\ \overline{N}_0(r,\nu^0_{f_2-b})\le\overline{N}_0(r,\nu^0_{f_1-a_3})+T_0(r,a_1)+2T_0(r,a_2)+T_0(r,a_3)+O(1).$$
\end{lemma}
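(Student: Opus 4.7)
The plan is to handle parts (a), (b), (c) separately, using for each an algebraic identity relating $f_2$ to $f_1$ together with the standard subadditivity of the characteristic function on annuli (i.e.\ $T_0(r,uv)$ and $T_0(r,u+v)$ are each at most $T_0(r,u)+T_0(r,v)+O(1)$) and the first main theorem bound $\overline N_0(r,\nu^0_u)\le T_0(r,u)+O(1)$.

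For (a) I would invert the Möbius relation, obtaining $f_1 = a_2 + (a_2-a_1)/(f_2-1)$. Estimating the right-hand side by the subadditivity of $T_0$ under products, sums, and reciprocals gives directly $T_0(r,f_1)\le T_0(r,f_2)+T_0(r,a_1)+T_0(r,a_2)+O(1)$, which rearranges to (a).

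For (b) I would exploit the three identities
\[
f_2 = \frac{f_1-a_1}{f_1-a_2},\qquad f_2-1 = \frac{a_2-a_1}{f_1-a_2},\qquad \frac{1}{f_2} = \frac{f_1-a_2}{f_1-a_1},
\]
and analyze where each of the three counting functions on the left can receive contributions. Generically, $\overline N_0(r,\nu^0_{f_2})$ is fed by zeros of $f_1-a_1$; $\overline N_0(r,\nu^0_{f_2-1})$ is fed by poles of $(f_1-a_2)$, because at a pole of $f_1$ the two factors $(f_1-a_1)$ and $(f_1-a_2)$ have equal-order poles and so $f_2\to 1$ forces a zero of $f_2-1$ of that order; and $\overline N_0(r,\nu^\infty_{f_2})$ is fed by zeros of $f_1-a_2$. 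The exceptional contributions come from the coincidence locus $\{a_1=a_2\}$ and from poles of $a_1$ and $a_2$; each is at most $T_0(r,a_1)+T_0(r,a_2)+O(1)$ by the first main theorem, and a careful tally fits the total error within $2T_0(r,a_1)+2T_0(r,a_2)+O(1)$.

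For (c) a direct computation gives
\[
f_2-b = \frac{(a_1-a_2)(f_1-a_3)}{(f_1-a_2)(a_3-a_2)}.
\]
The decisive observation is that at a pole of $f_1$ (with $a_2,a_3$ finite), the pole of $(f_1-a_3)$ in the numerator exactly cancels the pole of $(f_1-a_2)$ in the denominator, so $f_2-b \to (a_1-a_2)/(a_3-a_2) = 1-b \ne 0$ generically; in particular poles of $f_1$ do not produce zeros of $f_2-b$. Therefore $\overline N_0(r,\nu^0_{f_2-b})$ is bounded by $\overline N_0(r,\nu^0_{f_1-a_3})$ plus errors from zeros of $a_1-a_2$, from poles of $a_2$ (at which $1/(f_1-a_2)\to 0$), and from zeros of $a_3-a_2$. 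Applying the first main theorem fits these errors into $T_0(r,a_1)+2T_0(r,a_2)+T_0(r,a_3)+O(1)$.

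The main obstacle I anticipate is the bookkeeping in (b) and (c): the naive divisor inequality $\nu^0_{P/Q}\le\nu^0_P+\nu^\infty_Q$ would introduce spurious $\overline N_0(r,\nu^\infty_{f_1})$ contributions that are not present on the right-hand sides, so the cancellation of $f_1$-poles between numerator and denominator --- namely, the observation that $(f_1-a_i)$ and $(f_1-a_j)$ have equal-order poles at any pole of $f_1$, independently of $a_i,a_j$ --- has to be exploited explicitly rather than bounded crudely. Once this is handled, the precise coefficients $2T_0(r,a_i)$ in (b) and $T_0(r,a_1)+2T_0(r,a_2)+T_0(r,a_3)$ in (c) follow from routine applications of the first main theorem.
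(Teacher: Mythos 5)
Your part (b) is essentially the paper's own argument and is fine, but part (a) — the part where you depart from the paper — does not prove the stated inequality. The identity $f_1=a_2+\dfrac{a_2-a_1}{f_2-1}$ is correct, yet the estimate you call ``direct'' is not: term-by-term subadditivity gives $T_0(r,f_1)\le T_0(r,a_2)+T_0(r,a_2-a_1)+T_0(r,f_2-1)+O(1)\le T_0(r,f_2)+T_0(r,a_1)+2T_0(r,a_2)+O(1)$, because $a_2$ enters once as the additive term and again inside $a_2-a_1$; every elementary rearrangement of the inverse M\"obius map (e.g.\ $f_1=(a_2f_2-a_1)/(f_2-1)$, or passing through $1/(f_1-a_2)=(f_2-1)/(a_2-a_1)$) double-counts one of the $a_i$ in the same way. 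The lemma asserts coefficient $1$ on each $T_0(r,a_i)$, and this sharper constant is what feeds the numerical bookkeeping later (inequality (3.26) and the constants in Lemma 3.3 and Theorems 1.3--1.4). The paper obtains it by a different mechanism: write $f_1=f_{10}/f_{11}$, $f_2=f_{20}/f_{21}$, $a_i=a_{i0}/a_{i1}$ in reduced form, express the homogeneous coordinates of $f_1$ as bilinear combinations of those of $f_2,a_1,a_2$, so that there is a holomorphic $h$ with $|h|\,\|f_1\|\le C\,\|f_2\|\,\|a_1\|\,\|a_2\|$, and then use $T_0(r,\cdot)=\sum_{t=1/r,r}\frac{1}{2\pi}\int\log\|\cdot\|\,d\theta+O(1)$, Jensen, and $N_0(r,\nu^0_h)\ge 0$. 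Each $a_i$ then enters exactly once through $\|a_i\|$, which is what produces the coefficient $1$. Your (a) needs this (or an equivalent projective device); as written it proves a strictly weaker inequality.

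In (c) your factorization $f_2-b=\dfrac{(a_1-a_2)(f_1-a_3)}{(f_1-a_2)(a_3-a_2)}$ and the observation that poles of $f_1$ cancel are correct, but the list of exceptional contributions is off: zeros of $a_3-a_2$ create poles of $f_2-b$, not zeros; the relevant exceptional points are zeros of $a_1-a_2$ together with poles of $a_3-a_2$ (i.e.\ poles of $a_2$ or $a_3$). With your list (zeros of $a_1-a_2$, poles of $a_2$, and zeros of $a_3-a_2$) the first-main-theorem tally is $T_0(r,a_1)+3T_0(r,a_2)+T_0(r,a_3)$, which overshoots the stated bound by $T_0(r,a_2)$. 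This is easily repaired — the paper groups the exceptional points via $\dfrac{1}{f_2-b}=\dfrac{a_3-a_2}{a_1-a_2}\Bigl(1+\dfrac{a_3-a_2}{f_1-a_3}\Bigr)$, so that $\min\{1,\nu^0_{f_2-b}\}\le\min\{1,\nu^0_{f_1-a_3}+\nu^0_{a_1-a_2}+\nu^\infty_{a_3-a_2}\}$, giving exactly $T_0(r,a_1)+2T_0(r,a_2)+T_0(r,a_3)$ — but as stated your accounting does not yield (c) with the correct coefficients.
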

\begin{proof} We write $f_i=\frac{f_{i0}}{f_{i1}}$, where $f_{i0}$ and $f_{i1}$ are holomorphic functions defined on $\A(R_0)$ without common zeros, for $i=1,2$. Similarly, we write  $a_i=\frac{a_{i0}}{a_{i1}}$, where $a_{i0}$ and $a_{i1}$ are holomorphic functions defined on $\A(R_0)$ without common zeros, for $i=1,2,3$.  We have
\begin{align*}
f_1=\frac{a_1-a_2f_2}{1-f_2},\text{i.e., }\frac{f_{10}}{f_{11}}=\frac{f_{21}a_{10}a_{21}-f_{20}a_{11}a_{20}}{f_{21}a_{11}a_{21}-f_{20}a_{11}a_{21}}.
\end{align*}
Then, there is a holomorphic function $h$ and a positive constant $C$ such that 
\begin{align*}
&hf_{10}=f_{21}a_{10}a_{21}-f_{20}a_{11}a_{20},\\ 
&hf_{11}=f_{21}a_{11}a_{21}-f_{20}a_{11}a_{21}\\
\text{and }&|h|\cdot\|f_1\|\le C\|f_2\|\cdot\|a_1\|\cdot \|a_2\|.
\end{align*}
This implies that 
$$ T_0(r,f_1)+N_0(r,h)\le T_0(r,f_2)+T_0(r,a_1)+T_0(r,a_2)+O(1).$$
Therefore, 
$$T_0(r,f_2)\ge T_0(r,f_1)-T_0(r,a_1)-T_0(r,a_2)+O(1)$$
and we get the inequality (a).

From the assumption $f_2=\frac{f_1-a_1}{f_1-a_2}$, we easily have
\begin{align*}
\overline{N}_0(r,\nu^0_{f_2})&+\overline{N}_0(r,\nu^0_{f_2-1})+\overline{N}_0(r,\nu^\infty_{f_2})\\
&\le \overline{N}_0(r,\nu^\infty_{f_1})+\sum_{i=1}^2\overline{N}_0(r,\nu^0_{f_1-a_i})+(\overline{N}_0(r,\nu^0_{a_1-a_2})+\overline{N}_0(r,\nu^\infty_{a_1})+\overline{N}_0(r,\nu^\infty_{a_2}))\\
&\le \overline{N}_0(r,\nu^\infty_{f_1})+\sum_{i=1}^2\left(\overline{N}_0(r,\nu^0_{f_1-a_i})+2T_0(r,a_i)\right)+O(1).
\end{align*}
Then we have the inequality (b).

Similarly, we have
$$ \dfrac{1}{f_2-b}=\dfrac{(a_3-a_2)}{(a_1-a_2)}\cdot\left (1+\frac{a_3-a_2}{f_1-a_3}\right).$$
This implies that
$$ \min\{1,\nu^0_{f_2-b}\}\le\min\{1,\nu^\infty_{a_3-a_2}+\nu^0_{a_1-a_2}+\nu^0_{f_1-a_3}\}.$$
It yields that
 $$ \overline{N}_0(r,\nu^0_{f_2-b})\le\overline{N}_0(r,\nu^0_{f_1-a_3})+T_0(r,a_1)+2T_0(r,a_2)+T_0(r,a_3),$$
and we get the inequality (c). This completes the proof of the lemma.
\end{proof}

\begin{lemma}\label{3.2}
 Let $f_1,f_2$ be two meromorphic functions on $\A(R_0)$ and let $a_1,a_2,a_3,a_4$ be four distinct meromorphic functions on $\A(R_0)$ such that
$$ f_2=\frac{f_1-a_1}{f_1-a_2}\cdot\frac{a_3-a_2}{a_3-a_1}.$$
Then we have:
\begin{align*}
\mathrm{(a)}\ &T_0(r,f_2)\ge T_0(r,f_1)-\sum_{i=1}^3T_0(r,a_i)+O(1),\\
\mathrm{(b)}\ &\overline{N}_0(r,\nu^0_{f_2})+\overline{N}_0(r,\nu^0_{f_2-1})+\overline{N}_0(r,\nu^\infty_{f_2})\\
&\hspace{30pt} \le \sum_{i=1}^3\left(\overline{N}_0(r,\nu^0_{f_1-a_i})+3T_0(r,a_i)\right)+O(1).
\end{align*}
Moreover, if we set $b=\dfrac{a_4-a_1}{a_4-a_2}\cdot\dfrac{a_3-a_2}{a_3-a_1}$ then
$$\mathrm{(c)}\  \overline{N}_0(r,\nu^0_{f_2-b})\le\overline{N}_0(r,\nu^0_{f_1-a_4})+2T_0(r,a_1)+3T_0(r,a_2)+2T_0(r,a_3)+T_0(r,a_4).$$
\end{lemma}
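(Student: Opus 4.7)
The plan is to follow the template of Lemma \ref{3.1} verbatim: each assertion reduces to an explicit rational identity followed by either a pointwise norm comparison (for (a)) or a divisor-counting argument (for (b) and (c)). Note that the naive route of applying Lemma \ref{3.1}(a) to $h=(f_1-a_1)/(f_1-a_2)$ and then absorbing the scalar $c=(a_3-a_2)/(a_3-a_1)$ into $f_2=ch$ does not suffice, since $T_0(r,c)$ can contribute up to $T_0(r,a_1)+T_0(r,a_2)+2T_0(r,a_3)+O(1)$, overshooting the target. So for (a) one must redo the direct computation.

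For (a), I would first invert the defining relation to express $f_1$ as a M\"obius transformation of $f_2$ with coefficients in $a_1,a_2,a_3$:
$$ f_1 = \frac{a_1(a_3-a_2) - a_2 f_2(a_3-a_1)}{(a_3-a_2) - f_2(a_3-a_1)}. $$
Writing $f_i=f_{i0}/f_{i1}$ and $a_i=a_{i0}/a_{i1}$ in coprime form and clearing denominators yields an identity $f_{10}/f_{11}=A/B$ where $A,B$ are explicit polynomial combinations of the $f_{2j}$'s and $a_{ij}$'s. By coprimality of $f_{10},f_{11}$ there is a holomorphic $h$ on $\A(R_0)$ with $A=hf_{10}$, $B=hf_{11}$. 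Using $|a_{ij}|\le\|a_i\|$ and $|f_{2j}|\le\|f_2\|$, the triangle inequality gives $|h|\cdot\|f_1\|=\sqrt{|A|^2+|B|^2}\le C\|f_2\|\cdot\|a_1\|\cdot\|a_2\|\cdot\|a_3\|$ for an absolute constant $C$. Taking $\log$, averaging over $|z|=t$, summing over $t=1/r,r$, and invoking Jensen's formula (with $N_0(r,\nu^0_h)\ge 0$ since $h$ is holomorphic) then yields (a). The bookkeeping in this step is the hard part: one must carefully track cross-terms in $A$ and $B$ so that no stray $\|f_1\|$ factor creeps onto the right, and so that exactly the three factors $\|a_1\|\|a_2\|\|a_3\|$ appear and not a fourth.

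For (b), I would perform the algebraic simplification directly. The key identity, obtained by expanding $(f_1-a_1)(a_3-a_2)-(f_1-a_2)(a_3-a_1)$ and collecting, is
$$ f_2 - 1 = \frac{(a_1-a_2)(f_1-a_3)}{(f_1-a_2)(a_3-a_1)}. $$
Combined with the defining factorizations of $f_2$ and $1/f_2$, this produces divisor inequalities $\nu^0_{f_2}\le\nu^0_{f_1-a_1}+\nu^0_{a_3-a_2}$, $\nu^0_{f_2-1}\le\nu^0_{f_1-a_3}+\nu^0_{a_1-a_2}$, and $\nu^\infty_{f_2}\le\nu^0_{f_1-a_2}+\nu^0_{a_3-a_1}$. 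Summing as $\overline N_0$'s and bounding $\overline N_0(r,\nu^0_{a_i-a_j})\le T_0(r,a_i)+T_0(r,a_j)+O(1)$ via the first main theorem yields (b), in fact with coefficient $2$ in place of the stated $3$.

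For (c), the same computation with $a_4$ in place of $a_3$ factors $f_2-b$ as
$$ f_2 - b = \frac{(a_3-a_2)(a_1-a_2)(f_1-a_4)}{(a_3-a_1)(a_4-a_2)(f_1-a_2)}, $$
obtained by pulling out $(a_3-a_2)/(a_3-a_1)$ from $f_2-b$ and reusing the $(a_1-a_2)(f_1-a_{\star})$ identity from (b). From this one reads off $\overline N_0(r,\nu^0_{f_2-b})\le\overline N_0(r,\nu^0_{f_1-a_4})+\overline N_0(r,\nu^0_{a_3-a_2})+\overline N_0(r,\nu^0_{a_1-a_2})$, and applying the first main theorem term by term gives a bound no weaker than the stated (c).
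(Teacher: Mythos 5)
Your part (a) is essentially the paper's argument (coprime representations, a common holomorphic factor $h$, the norm inequality $|h|\,\|f_1\|\le C\|f_2\|\,\|a_1\|\,\|a_2\|\,\|a_3\|$, then Jensen), so no issue there. The gap is in (b) and (c): the pointwise divisor inequalities you read off from the factorizations are false in general, because a zero of $f_2$, $f_2-1$ or $f_2-b$ can also be created by \emph{poles} of the coefficient functions $a_i$ when the pole orders of numerator and denominator fail to cancel. Concretely, take $a_1=\dfrac{1}{z-z_0}$, $a_2=0$, $a_3=1$ and $f_1=\dfrac{1}{z-z_0}+z$ at a point $z_0$ of the annulus. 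Then $f_1-a_1=z$ and $a_3-a_2=1$ are nonzero at $z_0$, while $f_1-a_2$ and $a_3-a_1$ both have simple poles there, so
$$ f_2=\frac{(f_1-a_1)(a_3-a_2)}{(f_1-a_2)(a_3-a_1)} $$
has a double zero at $z_0$ although $\nu^0_{f_1-a_1}(z_0)=\nu^0_{a_3-a_2}(z_0)=0$; your claimed bound $\nu^0_{f_2}\le\nu^0_{f_1-a_1}+\nu^0_{a_3-a_2}$ (and likewise the analogous bounds for $\nu^0_{f_2-1}$, $\nu^\infty_{f_2}$ and $\nu^0_{f_2-b}$) fails at such points. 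This is exactly why your "improved" coefficient $2$ in (b) should have been a warning sign: the honest count must also charge the poles of the $a_i$.

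The repair is what the paper does. In (b) one adds $\sum_{i=1}^3\overline N_0(r,\nu^\infty_{a_i})$ to the list of admissible points, and then
$$ \sum_{1\le i<j\le 3}\overline N_0(r,\nu^0_{a_i-a_j})+\sum_{i=1}^3\overline N_0(r,\nu^\infty_{a_i})\le 3\sum_{i=1}^3T_0(r,a_i)+O(1), $$
which is precisely where the coefficient $3$ in the statement comes from. In (c) the clean way to capture all contributions is the paper's reciprocal identity
$$ \dfrac{1}{f_2-b}=\dfrac{(a_3-a_1)(a_4-a_2)}{(a_3-a_2)(a_1-a_2)}\cdot\left(1+\frac{a_4-a_2}{f_1-a_4}\right), $$
which gives $\min\{1,\nu^0_{f_2-b}\}\le\min\{1,\nu^\infty_{a_3-a_1}+\nu^\infty_{a_4-a_2}+\nu^0_{a_3-a_2}+\nu^0_{a_1-a_2}+\nu^0_{f_1-a_4}\}$ and hence the stated weights $2T_0(r,a_1)+3T_0(r,a_2)+2T_0(r,a_3)+T_0(r,a_4)$; your version, which keeps only the three zero divisors, drops the $\nu^\infty$ terms and is not valid pointwise. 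With these pole terms restored your argument becomes the paper's proof; without them, (b) and (c) are not established.
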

\begin{proof} We write $f_i=\frac{f_{i0}}{f_{i1}}$ and  $a_i=\frac{a_{i0}}{a_{i1}}\ (1\le i\le 4)$ similarly as in the proof of Lemma \ref{3.1}. 
By a simple computation, we have
\begin{align*}
&f_1=\frac{a_1(a_3-a_2)-a_2(a_3-a_1)f_2}{(a_3-a_2)-(a_3-a_1)f_2},\\
\text{i.e., }&\frac{f_{10}}{f_{11}}=\frac{f_{21}a_{10}(a_{21}a_{30}-a_{20}a_{31})-f_{20}a_{20}(a_{11}a_{30}-a_{10}a_{31})}{f_{21}a_{11}(a_{21}a_{30}-a_{20}a_{31})-f_{20}a_{21}(a_{11}a_{30}-a_{10}a_{31})}.
\end{align*}
Then, there is a holomorphic function $h$ and a positive constant $C$ such that 
\begin{align*}
&hf_{10}=f_{21}a_{10}(a_{21}a_{30}-a_{20}a_{31})-f_{20}a_{20}(a_{11}a_{30}-a_{10}a_{31}),\\ 
&hf_{11}=f_{21}a_{11}(a_{21}a_{30}-a_{20}a_{31})-f_{20}a_{21}(a_{11}a_{30}-a_{10}a_{31})\\
\text{and }&|h|\cdot\|f_1\|\le C\|f_2\|\cdot\|a_1\|\cdot\|a_2\|\cdot\|a_3\|.
\end{align*}
This implies that 
$$ T_0(r,f_1)+N_0(r,h)\le T_0(r,f_2)+\sum_{i=1}^3T_0(r,a_i)+O(1).$$
Therefore, 
$$T_0(r,f_2)\ge T_0(r,f_1)-\sum_{i=1}^3T_0(r,a_i)+O(1),$$
and we get the inequality (a).

It is also easy to see that
\begin{align*}
\overline{N}_0(r,\nu^0_{f_2})&+\overline{N}_0(r,\nu^0_{f_2-1})+\overline{N}_0(r,\nu^\infty_{f_2}) \\
&\le \sum_{i=1}^3\overline{N}_0(r,\nu^0_{f_1-a_i})+\sum_{1\le i<j\le 3}\overline{N}_0(r,\nu^0_{a_i-a_j})+\sum_{i=1}^3\overline{N}_0(r,\nu^\infty_{a_i})\\
&\le \sum_{i=1}^3\left(\overline{N}_0(r,\nu^0_{f_1-a_i})+3T_0(r,a_i)\right)+O(1).
\end{align*}
Then we have the inequality (b).

On the other hand, we have
$$ \dfrac{1}{f_2-b}=\dfrac{(a_3-a_1)(a_4-a_2)}{(a_3-a_2)(a_1-a_2)}\cdot\left (1+\frac{a_4-a_2}{f_1-a_4}\right).$$
This implies that
$$ \min\{1,\nu^0_{f_2-b}\}\le\min\{1,\nu^\infty_{a_3-a_1}+\nu^\infty_{a_4-a_2}+\nu^0_{a_3-a_2}+\nu^0_{a_1-a_2}+\nu^0_{f_1-a_4}\}.$$
It yields that
 $$ \overline{N}_0(r,\nu^0_{f_2-b})\le\overline{N}_0(r,\nu^0_{f_1-a_4})+2T_0(r,a_1)+3T_0(r,a_2)+2T_0(r,a_3)+T_0(r,a_4).$$
This completes the proof of the lemma.
\end{proof}

The proof of Theorem \ref{1.4} is a straightforward deduction from the next lemma.

\begin{lemma}\label{3.3}
Let $g$ be a nonconstant meromorphic function on $\A (R_0)$. Let $a_1,\dots ,a_5$ be five distinct meromorphic functions (may be equal to $\infty$). We have
$$2T_0(r,g)\le \sum_{i=1}^5\overline N_0(r,\nu^0_{g-a_i})+35\sum_{i=1}^5T_0(r,a_i)+S(r)\ (1\le r< R_0),$$
where $S(r)=S_g(r)+\sum_{i=1}^5S_{a_i}(r)$.
\end{lemma}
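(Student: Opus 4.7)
The plan is to combine the classical second main theorem on annuli for constant targets (Theorem~\ref{2.2}) with the Möbius change-of-variable formulas of Lemmas~\ref{3.1} and~\ref{3.2}, applied iteratively. First I would reduce to the case when all $a_i$ are finite meromorphic functions by using the convention $T_0(r,\infty)=0$ and $\overline{N}_0(r,\nu^0_{g-\infty})=\overline{N}_0(r,\nu^\infty_g)$ whenever some $a_i\equiv\infty$; after relabeling, the general case reduces to this one.

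For the central step, I would pick a triple, say $\{a_1,a_2,a_3\}$, and form the Möbius pullback
$$f_2 := \dfrac{g-a_1}{g-a_2}\cdot\dfrac{a_3-a_2}{a_3-a_1},$$
which by Lemma~\ref{3.2} sends $(a_1,a_2,a_3)$ to $(0,\infty,1)$ and sends $a_4,a_5$ to meromorphic functions $b_4,b_5$. Applying Theorem~\ref{2.2} to $f_2$ with the three constant targets $0,1,\infty$ and translating back via Lemma~\ref{3.2}(a),(b) produces a ``three-value'' inequality of the form
$$T_0(r,g)\le \sum_{i=1}^{3}\overline{N}_0(r,\nu^0_{g-a_i})+C\sum_{i=1}^{3}T_0(r,a_i)+S(r).$$
Iterating the reduction by applying Lemma~\ref{3.2} again at the level of $f_2$, this time with the triple $\{b_4,b_5,0\}$, I would form
$$f_3 := \dfrac{f_2-b_4}{f_2-b_5}\cdot\dfrac{0-b_5}{0-b_4},$$
whose three constant values $0,\infty,1$ correspond respectively to $g=a_4,a_5,a_1$. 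Applying Theorem~\ref{2.2} to $f_3$ and using parts~(b) and~(c) of Lemma~\ref{3.2} to bound the counting functions of $f_3$ first by those of $f_2$ and then by those of $g$ gives a companion three-value inequality involving the indices $\{1,4,5\}$.

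The main obstacle is assembling these three-value inequalities (and analogous ones for further triples) into a single five-value inequality with coefficient~$2$ on $T_0(r,g)$ and coefficient~$1$ on each $\overline{N}_0(r,\nu^0_{g-a_i})$. Since any two $3$-subsets of $\{1,\dots,5\}$ share at least one index, a naive sum of two three-value estimates produces an unwanted duplicate counting function, and pure convex combinations of three-value estimates cannot in fact reach the ratio~$2$ (they are limited to $5/3$). The resolution, following the method of Lemmas~3.1 and~3.2 in \cite{YC}, is to chain several Möbius pullbacks and use Lemma~\ref{3.2}(c) to bound the counting functions $\overline{N}_0(r,\nu^0_{f_2-b_\ell})$ of the non-target values by $\overline{N}_0(r,\nu^0_{g-a_\ell})$ plus characteristic functions of the $a_i$'s; this in effect promotes the three-value estimate on $f_2$ to a genuine four- or five-value estimate whose counting functions already match the target set, after which a suitable positive linear combination yields the claimed inequality. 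The explicit constant $35$ in front of $\sum_{i=1}^{5}T_0(r,a_i)$ is then the accumulated cost of the repeated Möbius comparisons (the factors $2,3,2,1,\ldots$ appearing in Lemma~\ref{3.2}(b),(c)) through the chain of reductions.
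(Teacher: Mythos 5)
There is a genuine gap at the heart of your plan. The crucial point of the paper's proof is not a clever combination of three-value estimates: it is a second main theorem for the Möbius pullback $f$ with the \emph{five} targets $0,1,\infty,b_1,b_2$, where $b_1,b_2$ are in general non-constant meromorphic functions (Claim \ref{3.8}: $2T_0(r,f)\le \overline N_0(r,\nu^\infty_f)+\sum_{i=1}^4\overline N_0(r,\nu^0_{f-b_i})+18\sum_{i=1,2}T_0(r,b_i)+S(r)$). Theorem \ref{2.2} only applies to constant targets, and a Möbius change of variable can normalize at most three of the five functions $a_1,\dots,a_5$ to constants at once; no chain of such pullbacks makes all five constant unless the relevant cross-ratios happen to be constant. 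Your own observation already exposes the obstruction: positive combinations of three-value inequalities can never exceed the ratio $5/3$, yet the lemma needs the ratio $2$. Your proposed resolution --- using Lemma \ref{3.2}(c) to ``promote'' a three-value estimate on $f_2$ to a four- or five-value estimate --- does not work, because part (c) merely compares counting functions between coordinates; it supplies no new value-distribution inequality, so adding those counting functions to the right-hand side cannot raise the coefficient of $T_0(r,f_2)$ on the left. In effect the promotion step silently assumes the very moving-target inequality you are trying to prove.

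What the paper actually does (following Lemmas 3.1--3.2 of \cite{YC}, which is the method you name but do not reproduce) is to introduce the auxiliary determinant
$F=\det\begin{pmatrix} ff'&f'&f^2-f\\ b_1b_1'&b_1'&b_1^2-b_1\\ b_2b_2'&b_2'&b_2^2-b_2\end{pmatrix}$,
treat the degenerate case $F\equiv 0$ by algebraic identities (which force $T_0(r,f)$ to be comparable to $T_0(r,b_2)$ or to be controlled by zeros, ones and poles of $b_1,b_2$), and in the case $F\not\equiv 0$ estimate the proximity functions $m_0(r,1/(f-b_j))$ on the arcs where $f$ is close to each target, using the lemma on logarithmic derivatives (Lemma \ref{2.1}) and the bound $\sum_i\bigl(N_0(r,\nu^0_{f-b_i})-\overline N_0(r,\nu^0_{f-b_i})\bigr)\le N_0(r,\nu^0_F)$ together with $T_0(r,F)\le 2T_0(r,f)+\dots$. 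Only after this moving-target inequality is in hand do Lemmas \ref{3.1} and \ref{3.2} enter, to transfer it back from $f$ to $g$; the constant $35$ then comes from symmetrizing the resulting asymmetric inequality over all permutations of the indices (and the case when some $a_i\equiv\infty$ is treated separately via Lemma \ref{3.1}, not by a mere relabeling convention). Without the determinant argument (or some equivalent moving-target second main theorem), your outline cannot reach the stated inequality.
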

Then, in order to prove Theorem \ref{1.4}, it is sufficient to prove Lemma \ref{3.3}.
\begin{proof}

We split the proof of the lemma into two parts.

\noindent
\textbf{Part A.} We first consider the case where $a_i\not\equiv\infty$ for all $i=1,\ldots,5$. We set
$$f = \frac{g-a_2}{g-a_1} \cdot \frac{a_3-a_1}{a_3-a_2}, b_1= \frac{a_4-a_2}{a_4-a_1} \cdot \frac{a_3-a_1}{a_3-a_2}, b_2= \frac{a_5-a_2}{a_5-a_1} \cdot \frac{a_3-a_1}{a_3-a_2},b_3=0\text{ and }b_4=1.$$
By Lemma \ref{3.2}, we have
\begin{align}\label{3.4}
T_0(r,g)&\le T_0(r,f)+\sum_{i=1}^3T_0(r,a_i)+O(1),\\
\label{3.5}
T_0(r,b_1)&\le\sum_{i=1}^4T_0(r,a_i)+O(1),\\
\label{3.6}
T_0(r,b_2)&\le\sum_{i=1}^3T_0(r,a_i)+T_0(r,a_5)+O(1)
\end{align}
and
\begin{align}\label{3.7}
\begin{split}
\overline{N}_0(r,\nu^\infty_f)+\sum_{i=1}^4\overline{N}_0(r,\nu^0_{f-b_i})
&\le \sum_{i=1}^5\overline{N}_0(r,\nu^0_{g-a_i})+7T_0(r,a_1)+9T_0(r,a_2)\\
&+7T_0(r,a_3)+T_0(r,a_4)+T_0(r,a_5)+O(1).
\end{split}
\end{align}

We now prove the following claim.
\begin{claim}\label{3.8}
$2T_0(r,f)\le \overline{N}_0(r,\nu^\infty_f)+\sum_{i=1}^4\overline{N}_0(r,\nu^0_{f-b_i})+18\sum_{i=1,2}T_0(r,b_i)+S(r).$
\end{claim}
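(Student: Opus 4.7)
The plan is to follow the Yau--Chen proof of the truncated second main theorem (\cite[Lemmas 3.1 and 3.2]{YC}), adapted to the annulus $\A(R_0)$ and modified to accommodate the two \emph{non-small} moving targets $b_1,b_2$ (while $b_3=0$ and $b_4=1$ are constants, so contribute only $O(1)$). The excess error incurred by $b_1,b_2$ failing to be small is exactly what the term $18\sum_{i=1,2}T_0(r,b_i)$ must absorb.

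\textbf{Main steps.} First I would introduce an auxiliary meromorphic function $\Phi$ on $\A(R_0)$, built from $f$, $f-1$, $f-b_1$, $f-b_2$ and their first derivatives. A natural shape is
$$\Phi \;=\; \frac{P(f,f',b_1,b_1',b_2,b_2')}{f(f-1)(f-b_1)(f-b_2)},$$
with $P$ a Wronskian-type combination, for example built from $W(f-b_i,f-b_j)=(f-b_i)(f-b_j)'-(f-b_i)'(f-b_j)$, possibly multiplied by a correction factor made from $b_i',b_i,b_1-b_2$. The expression is engineered so that: (i)~at each zero of one of $f,f-1,f-b_1,f-b_2$, and at each pole of $f$, the pole of $\Phi$ has order at most $1$, which yields truncation to level $1$ on the right; (ii)~the proximity $m_0(r,\Phi)$ is controlled, via the logarithmic derivative lemma (Lemma \ref{2.1}), by $S(r)+O(\sum_{i=1,2}T_0(r,b_i))$, since ratios $b_i'/b_i$, $b_i'/(b_i-b_j)$, $1/(b_1-b_2)$ enter $\Phi$; and (iii)~at the zeros of $\Phi$, combined with the First Main Theorem, one recovers a double contribution to $T_0(r,f)$. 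With $\Phi$ in hand, I would then apply Jensen's formula
$$T_0(r,\Phi)=m_0(r,\Phi)+N_0(r,\nu^\infty_\Phi)=T_0(r,1/\Phi)+O(1),$$
and compare both sides: the lower bound for $N_0(r,\nu^0_\Phi)$ should produce $2T_0(r,f)$ on the left of the claim, while the upper bound for $N_0(r,\nu^\infty_\Phi)$ should produce $\overline N_0(r,\nu^\infty_f)+\sum_{i=1}^4\overline N_0(r,\nu^0_{f-b_i})$ modulo explicit $T_0(r,b_i)$ corrections coming from the zeros and poles of $b_1,b_2,b_1-b_2,b_i,b_i-1$.

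\textbf{Main obstacle.} The delicate point is the explicit design of $P$ so that all three requirements above hold simultaneously, with the moving-target error bounded \emph{precisely} by $18\sum_{i=1,2}T_0(r,b_i)$. Each appearance of $b_i'$, $b_i'/b_i$, $b_i'/(b_i-b_j)$, or $1/(b_1-b_2)$ in $\Phi$ must be analyzed via Lemma \ref{2.1} together with the first main theorem estimates $N_0(r,\nu^0_{b_i})+N_0(r,\nu^\infty_{b_i})\le 2T_0(r,b_i)+O(1)$ and $T_0(r,b_1-b_2)\le T_0(r,b_1)+T_0(r,b_2)+O(1)$. Each such bookkeeping step costs a few units of $T_0(r,b_i)$, and it is the sum of these losses that should yield the constant $18$. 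Once the auxiliary function $\Phi$ has been designed to meet (i)--(iii) and the counting bookkeeping is carried out, the inequality follows from a routine (if lengthy) combination of the logarithmic derivative lemma, Jensen's formula, and the level-$1$ truncation arguments standard in Yau--Chen-type proofs.
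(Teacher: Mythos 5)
Your overall strategy does match the paper's: Claim \ref{3.8} is proved there by adapting the method of Lemmas 3.1 and 3.2 of \cite{YC} to the annulus, using an auxiliary Wronskian-type function, the lemma on logarithmic derivatives (Lemma \ref{2.1}), and the fact that multiple zeros of $f-b_i$ are zeros of the auxiliary function (this is what yields truncation to level $1$). But as written your argument has a genuine gap: the auxiliary function is never actually constructed, and the entire content of the claim, including the explicit constant $18$, lives in that construction and in the accompanying bookkeeping. The paper takes $F$ to be the $3\times 3$ determinant (\ref{3.9}) with rows $(ff',f',f^2-f)$, $(b_1b_1',b_1',b_1^2-b_1)$, $(b_2b_2',b_2',b_2^2-b_2)$, estimates $m_0\bigl(r,\tfrac{1}{f-b_j}\bigr)$ on the arcs where $f$ is $\delta$-close to $b_j$ by bounding $\log^+\bigl|\tfrac{F}{f-b_j}\bigr|$ through logarithmic derivatives, uses $N_0(r,\nu^0_F)$ to absorb the multiplicities as in (\ref{3.18}), and then bounds $T_0(r,F)$ by $2T_0(r,f)+\overline N_0(r,\nu^\infty_f)+\sum_i N_0(r,\nu^\infty_{b_i})+\tfrac{2}{3}T(r)+S(r)$; the constant $18$ comes out of exactly this accounting. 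Asserting that a suitable $P$ ``can be engineered'' to satisfy your (i)--(iii) is precisely the step that must be carried out, not assumed.

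More seriously, your sketch tacitly assumes the auxiliary function is not identically zero. Any Wronskian-type $\Phi$ of the shape you propose can vanish identically (this happens when $f$, $b_1$, $b_2$ satisfy a suitable relation), and in that situation your item (iii) produces nothing: one cannot recover $2T_0(r,f)$ from the zeros of the zero function, nor apply Jensen's formula to $\Phi$. The paper must and does treat this degenerate situation as a separate case ($F\equiv 0$), which splits into four subcases according to whether $\tfrac{b_1'}{b_1}\equiv\tfrac{b_2'}{b_2}$, $\tfrac{b_1'}{b_1-1}\equiv\tfrac{b_2'}{b_2-1}$, etc.; in each one shows either that $f$ is a M\"obius image of $b_2$, so $T_0(r,f)=T_0(r,b_2)+O(1)$, or, via (\ref{3.12})--(\ref{3.15}) and Theorem \ref{2.2} applied to the values $0,1,\infty$, that $T_0(r,f)\le 8T_0(r,b_1)+8T_0(r,b_2)+S(r)$, after which the claim follows. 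There is also a preliminary reduction you omit: $b_1$ or $b_2$ may be constant (they are cross-ratios of the $a_i$), and in that case the claim follows at once from Theorem \ref{2.2}; the Wronskian construction requires both to be non-constant. Without the explicit construction, the degenerate case, and the constancy reduction, your proposal is a reasonable plan rather than a proof.
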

Indeed, if $b_1$ or $b_{2}$ is constant then the claim directly follows from Theorem \ref{2.2}.
Therefore, we may assume that both $b_1$ and $b_2$ are not constant. We define
\begin{equation}\label{3.9}
F= \begin{vmatrix}
ff'&f'&f^2-f \\
b_1b_1'&b_1'&b_1^2-b_1 \\
b_2b_2' &b_2'&b_2^2-b_2
\end{vmatrix}.
\end{equation}
Consider the following two cases.

\textbf{Case 1: }$F(z)\equiv0$. From (\ref{3.9}) we get
\begin{align}\label{3.10}
\biggl (\dfrac{b_1'}{b_1}-\dfrac{b_2'}{b_2}\biggl )\biggl (\dfrac{f'}{f-1}-\dfrac{b_2'}{b_2-1}\biggl )\equiv \biggl (\dfrac{b_1'}{b_1-1}-\dfrac{b_2'}{b_2-1}\biggl )\biggl (\dfrac{f'}{f}-\dfrac{b_2'}{b_2}\biggl ).
\end{align}
We distinguish the following four subcases. 

\emph{Subcase 1:} $\dfrac{b_1'}{b_1}\equiv\dfrac{b_2'}{b_2}.$ We have $\dfrac{b_1'}{b_1-1}\not\equiv\dfrac{b_2'}{b_2-1}$, since otherwise $b_1$ and $b_{2}$ are constants. Therefore, from (\ref{3.10}), we have $\dfrac{f'}{f}\equiv\dfrac{b_2'}{b_2}$, and hence $f=cb_2$ with a constant $c$. This implies that $T_0(r,f)=T_0(r,b_2)$ and the claim is proved in this subcase.

\emph{Subcase 2:} $\dfrac{b_1'}{b_1-1}\equiv\dfrac{b_2'}{b_2-1}.$ By the same arguments as in Subcase 1, we get again the inequality of the claim in this subcase. 

\emph{Subcase 3:} $\dfrac{b_1'}{b_1}\not\equiv\dfrac{b_2'}{b_2}, \dfrac{b_1'}{b_1-1}\not\equiv\dfrac{b_2'}{b_2-1},\dfrac{b_1'}{b_1}-\dfrac{b_2'}{b_2} \equiv\dfrac{b_1'}{b_1-1}-\dfrac{b_2'}{b_2-1}.$ The identity (\ref{3.10}) implies that  
$$\dfrac{f'}{f-1}-\dfrac{f'}{f}\equiv \dfrac{b_2'}{b_2-1}-\dfrac{b_2'}{b_2}.$$
Then, we have
$$\dfrac{f-1}{f}\equiv c \cdot\dfrac{b_2-1}{b_2},$$
where $c$ is a constant. Therefore $f=\frac{b_2}{(c-1)b_2-c}$, and hence $T_0(r,f)=T_0(r,b_2)$. We get again the inequality of the claim in this subcase.

\emph{Subcase 4:} $\dfrac{b_1'}{b_1}\not\equiv\dfrac{b_2'}{b_2}, \dfrac{b_1'}{b_1-1}\not\equiv\dfrac{b_2'}{b_2-1},\dfrac{b_1'}{b_1}-\dfrac{b_2'}{b_2} \not\equiv\dfrac{b_1'}{b_1-1}-\dfrac{b_2'}{b_2-1}.$ The identity (\ref{3.10}) may be rewritten as
\begin{align}\label{3.11}
\biggl (\dfrac{b_1'}{b_1}-\dfrac{b_2'}{b_2}\biggl )\dfrac{f'}{f-1}-\biggl (\dfrac{b_1'}{b_1-1}-\dfrac{b_2'}{b_2-1}\biggl )\dfrac{f'}{f}\equiv \dfrac{b_1'b_2'}{b_1(b_2-1)}-\dfrac{b_2'b_1'}{b_2(b_1-1)}.
\end{align}
From (\ref{3.11}), we see that each zero of $(f-1)$ must be a zero or an $1$-point or a pole of $b_j\  (j=1, 2)$ or a zero of $\dfrac{b_1'}{b_1}-\dfrac{b_2'}{b_2}$. Therefore,
\begin{align}\label{3.12}
\min\{1,\nu^0_{f-1}\}\le\sum_{i=1,2}\sum_{a=0,1,\infty}\min\{1,\nu^{0}_{b_i-a}\}+\min\left\{1,\nu^0_{\frac{b_1'}{b_1}-\frac{b_2'}{b_2}}\right\} .
\end{align}
Similarly, we have
\begin{align}\label{3.13}
\min\{1,\nu^0_{f}\}\le\sum_{i=1,2}\sum_{a=0,1,\infty}\min\{1,\nu^{0}_{b_i-a}\}+\min\left\{1,\nu^0_{\frac{b_1'}{b_1-1}-\frac{b_2'}{b_2-1}}\right\} .
\end{align}
From (\ref{3.11}), we also see that each pole of $f$ must be a zero or an $1-$point or a pole of $b_j\  (j=1, 2)$ or a zero of $\biggl (\dfrac{b_1'}{b_1}-\dfrac{b_2'}{b_2}\biggl )- \biggl (\dfrac{b_1'}{b_1-1}-\dfrac{b_2'}{b_2-1}\biggl )$. Therefore, by the similar arguments, we have
\begin{align}\label{3.14}
\min\{1,\nu^\infty_{f}\}\le\sum_{i=1,2}\sum_{a=0,1,\infty}\min\{1,\nu^{0}_{b_i-a}\}+\min\left\{1,\nu^0_{\bigl (\frac{b_1'}{b_1}-\frac{b_2'}{b_2}\bigl )- \bigl (\frac{b_1'}{b_1-1}-\frac{b_2'}{b_2-1}\bigl )}\right\}.
\end{align}
Combining (\ref{3.12}), (\ref{3.13}) and (\ref{3.14}), we have 
\begin{align}\label{3.15}
\begin{split}
\sum_{a=0,1,\infty}\min\{1,\nu^0_{f-a}\}&\le\sum_{i=1,2}\sum_{a=0,1,\infty}\min\{1,\nu^{0}_{b_i-a}\}+\min\bigl\{1,\nu^0_{\frac{b_1'}{b_1}-\frac{b_2'}{b_2}}\bigl\}\\
&+\min\bigl\{1,\nu^0_{\frac{b_1'}{b_1-1}-\frac{b_2'}{b_2-1}}\bigl\}+\min\bigl\{1,\nu^0_{\bigl (\frac{b_1'}{b_1}-\frac{b_2'}{b_2}\bigl )- \bigl (\frac{b_1'}{b_1-1}-\frac{b_2'}{b_2-1}\bigl )}\bigl\}.
\end{split}
\end{align}
By Theorem \ref{2.2}, we get
\begin{align*}
T_0( r,f)&\le \overline{N}_0(r,\nu^0_f)+\overline{N}_0(r,\nu^0_{f-1})+\overline{N}_0(r,\nu^\infty_f)+S(r)\\
&\le 2\sum_{i=1}^2\left(T_0\left(r,\frac{b_i'}{b_i}\right)+T_0\left(r,\frac{b_i'}{b_i-1}\right)\right)+S(r)\\
&\le 2\sum_{i=1}^2\left(N_0\left(r,\nu^\infty_{\frac{b_i'}{b_i}}\right)+N_0\left(r,\nu^\infty_{\frac{b_i'}{b_i-1}}\right)\right)+S(r)\\
&\le 2\sum_{i=1}^2(N_0(r,\nu^0_{b_i})+2N_0(r,\nu^\infty_{b_i})+N_0(r,\nu^0_{b_i-1}))\\
&\le 8T_0(r,b_1)+8T_0(r,b_2).
\end{align*}
Then, we have the desired inequality of the claim in this subcase.

\textbf{Case 2:} $F(z)\not\equiv0$. For $t=r$ or $t=\frac{1}{r}\ (r>1)$, we set
\begin{align*}
\delta (z)&= \min\{1,|b_1(z)|,|b_2(z)|,|b_1(z)-1|,|b_2(z)-1|,|b_1(z)-b_2(z)|\},\\
\theta _j (t)& = \{\theta:|f(te^{i\theta})-b_j(te^{i\theta})|\le\delta (te^{i\theta})\},(j=1,2),\\
\theta _j(t)& = \{\theta:|f(te^{i\theta})-b_j|\le\delta (te^{i\theta})\},(j=3,4).
\end{align*}
It is clear that the sets $\theta_i(t)\cap\theta_j(t) \ ( i\neq j, i, j=1, 2, 3, 4)$ have at most finitely many points. Then, we easily see that  
\begin{align}\label{3.16}
\begin{split}
\sum_{t=1/r,r}\dfrac{1}{2\pi}\int\limits_{0}^{2\pi}\log \dfrac{1}{\delta (te^{i\theta})}d\theta
&\le m_0\left( r,\dfrac{1}{b_1}\right)+m_0\left( r,\dfrac{1}{b_2}\right)+m_0\left( r,\dfrac{1}{b_1-1}\right)\\
&+m_0\left( r,\dfrac{1}{b_2-1}\right)+m_0\left( r,\dfrac{1}{b_1-b_2}\right)\\
&\le T(r)+S(r)+O(1),
\end{split}
\end{align}
where $T(r)=3 T_0(r,b_1)+3T_0(r,b_2).$ We also see that
\begin{align*}
ff'&=(f-b_1)(f'-b_1')+b_1'(f-b_1)+b_1(f'-b_1')+b_1b_1',\\
f'&=(f'-b_1')+b_1',\\
f^2-f&=(f-b_1)^2+(2b_1-1)(f-b_1)+b_1^2-b_1.
\end{align*}
Substituting these functions (on the right hand side) into (\ref{3.9}), by the properties of the Wronskian, we have
\begin{equation}\label{3.17}
F= \begin{vmatrix}
\varphi&f'-b_1'&\psi \\
b_1b_1'&b_1'&b_1'-b_1\\
b_2b_2' &b_2'&b_2'-b_2
\end{vmatrix}, 
\end{equation}
where
\begin{align*}
\varphi &=(f-b_1)(f'-b_1')+b_1'(f-b_1)+b_1(f'-b_1'),\\
\psi&=(f-b_1)^2+(2b_1-1)(f-b_1).
\end{align*}
From (\ref{3.17}) we see that each zero with multiplicity $p$ $(p>1)$ of $f-b_1,$ which is neither a pole of $b_1$ nor a pole of $b_2,$ must be a zero of  $F$ with multiplicity at least $p-1$. Similarly, each zero with multiplicity $p$ $(p>1)$ of $f-b_2,$ which is neither a pole of $b_1$ nor a pole of $b_2,$ must be a zero of $F$ with multiplicity at least $p-1$. Moreover, from (\ref{3.9}) one sees that each zero of multiplicity $p$ $(p>1)$ of $f$ or $f-1,$ which is neither a pole of  $b_1$ nor a pole of $b_2,$ must be a zero of $F$ with multiplicity at least $p-1$. This implies that
\begin{align}\label{3.18}
\sum_{i=1}^4(N(r,\nu^0_{f-b_i})-\overline{N}_0(r,\nu^0_{f-b_i}))\le N(r,\nu^0_F).
\end{align}

We now estimate the quantities $m_0(r,\frac{1}{f-b_j})\ (1\le j\le 4)$.

Since $|f(te^{i\theta})-b_1(te^{i\theta})|\le\delta(te^{i\theta})\le 1$ for every $\theta\in\theta_1(t)$, 
\begin{align}\label{3.19}
\begin{split}
&\sum_{t=1/r,r}\frac{1}{2\pi }\int\limits_{\theta_1(t)}\log^+\left| \dfrac{F}{f-b_1}\right|d\theta\le\sum_{t=1/r,r} \frac{1}{2\pi }\int\limits_{\theta_1(t)}\log\left (1+\left| \dfrac{F}{f-b_1}\right|\right)d\theta\\
&\le \sum_{t=1/r,r}\frac{1}{2\pi }\int\limits_{\theta_1(t)}\log\left\{(1+|b_1'|)(1+|b_1|)\left(1+\left|\frac{f'-b_1'}{f-b_1}\right|\right)\right\}d\theta\\
&\hspace{30pt}+\sum_{t=1/r,r}\frac{1}{2\pi }\int\limits_{\theta_1(t)}\log\left\{(1+|b_1|)(1+|b_1'|)\right\}d\theta\\
&\hspace{30pt}+\sum_{t=1/r,r}\frac{1}{2\pi }\int\limits_{\theta_1(t)}\log\left\{(1+|b_2|)(1+|b_2'|)\right\}d\theta+O(1)\\
&\le m_0\left( r,\dfrac{f'-b_1'}{f-b_1}\right)+2m_0\left(r,\frac{b_1'}{b_1}\right)+m_0\left(r,\frac{b_2'}{b_2}\right)\\
&\hspace{30pt}+4\sum_{t=1/r,r}\frac{1}{2\pi }\int\limits_{\theta_1(t)}\log^+|b_1|d\theta+2\sum_{t=1/r,r}\frac{1}{2\pi }\int\limits_{\theta_1(t)}\log^+|b_2|d\theta+O(1)\\
&\le 4\sum_{t=1/r,r}\frac{1}{2\pi }\int\limits_{\theta_1(t)}\log^+|b_1|d\theta+2\sum_{t=1/r,r}\frac{1}{2\pi }\int\limits_{\theta_1(t)}\log^+|b_2|d\theta+S(r).
\end{split}
\end{align}
Here, the second inequality comes from the fact that
\begin{align}\label{3.20}
 \log^+|\det(x_{ij};1\le i,j\le 3)|\le \sum_{i=1}^3\log^+\max\{|x_{ij}|;1\le j\le 3\}+O(1)
\end{align}
for every $(3\times 3)$-matrix of complex numbers $(x_{ij})_{1\le i,j\le 3}$.

Therefore, from (\ref{3.16}) and (\ref{3.19}) we have
\begin{align}\label{3.21}
\begin{split}
&m_0\left( r,\dfrac{1}{f-b_1}\right)\le \sum_{t=1/r,r}\frac{1}{2\pi }\int\limits_{\theta_1(t)}\log^+\left| \dfrac{1}{f-b_1} \right|d\theta +\dfrac{1}{2\pi}\int\limits_{0}^{2\pi}\log \dfrac{1}{\delta(re^{i\theta})}d\theta\\
&\le \sum_{t=1/r,r}\frac{1}{2\pi }\int\limits_{\theta_1(t)}\log^+\left| \dfrac{F}{f-b_1} \right|d\theta +\frac{1}{2\pi } \int\limits_{\theta_1(r)}\log^+\dfrac{1}{|F|}d\theta+T(r)+S(r)\\
&= \sum_{t=1/r,r}\frac{1}{2\pi }\int\limits_{\theta_1(t)}\biggl(\log^+ \dfrac{1}{|F|}d\theta+4\log^+|b_1|+2\log^+|b_2|\biggl)d\theta+T(r)+S(r). 
\end{split}
\end{align}
Similarly, we get
\begin{align}
\label{3.22}
\begin{split}
m_0\left( r,\dfrac{1}{f-b_2}\right)&\le \sum_{t=1/r,r}\frac{1}{2\pi }\int\limits_{\theta_2(t)}\biggl(\log^+ \dfrac{1}{|F|}d\theta+2\log^+|b_1|+4\log^+|b_2|\biggl)d\theta\\
&+T(r)+S(r).
\end{split}
\end{align}

On the other hand, for each $\theta\in\theta_j(t)\ (j=3,4)$ we have $|f(te^{i\theta})-b_j|\le2$ and hence
$$ \log^+\frac{|F(te^{i\theta})|}{|f(te^{i\theta})-b_j|}\le \log^+\frac{|f'(te^{i\theta})|}{|f(te^{i\theta})-b_j|}+\sum_{i=1}^2\left(2\log^+|b_i(te^{i\theta})|+\log^+\frac{|b_i'(te^{i\theta})|}{|b_i(te^{i\theta})|}\right)+O(1).$$
Similarly as above, for $j=3,4$, we get
\begin{align}
\label{3.23}
\begin{split}
 m_0\left( r,\dfrac{1}{f-b_j}\right)&\le  \sum_{t=1/r,r}\frac{1}{2\pi }\int\limits_{\theta_j(t)}\biggl(\log^+ \dfrac{1}{|F|}d\theta+2\log^+|b_1|+2\log^+|b_2|\biggl)d\theta\\
&+T(r)+S(r).
\end{split}
\end{align}

Combining (\ref{3.21}), (\ref{3.22}) and (\ref{3.23}), we have
$$\sum_{i=2}^5m_0\left( r,\dfrac{1}{f-a_i}\right)\le m_0\left( r,\dfrac{1}{F}\right)+4m_0(r,b_1)+4m_0(r,b_2)+4T(r)+S(r).$$
Therefore,
\begin{align*}
4T_0( r,f)&\le N_0(r,\nu^0_f)+N_0(r,\nu^0_{f-1})+N_0(r,\nu^0_{f-b_1})+N_0(r,\nu^0_{f-b_2})-N(r,\nu^0_F)\\
&+T_0(r,F)+4m_0(r,b_1)+4m_0(r,b_2)+4T(r)+S(r).
\end{align*}
Combining this inequality and (\ref{3.18}), we obtain
\begin{align}\label{3.24}
\begin{split}
4T_0( r,f)&\le \overline N_0(r,\nu^0_f)+\overline N_0(r,\nu^0_{f-1})+\overline N_0(r,\nu^0_{f-b_1})+\overline N_0(r,\nu^0_{f-b_2})\\
&+T_0(r,F)+4m_0(r,b_1)+4m_0(r,b_2)+4T(r)+S(r).
\end{split}
\end{align}
Also, from (\ref{3.9}) and (\ref{3.20}), we have
\begin{align*}
m_0(r,F)&\le 2m_0(r,f)+m_0\left(r,\frac{f'}{f}\right)+\sum_{i=1}^2\left(2m_0(r,b_i)+m_0\left(r,\frac{b_1'}{b_1}\right)\right)+S(r)\\
&\le 2m_0(r,f)+2m_0(r,b_1)+2m_0(r,b_2)+S(r)
\end{align*}
and
$$ N_0(r,\nu^\infty_F)\le 2N_0(r,\nu^\infty_f)+\overline{N}_0(r,\nu^\infty_f)+ 3\sum_{i=1}^2N_0(r,\nu^\infty_{b_i}).$$
These inequalities imply that
\begin{align}\label{3.25}
T_0(r,F)\le 2T_0(r,f)+\overline{N}_0(r,\nu^\infty_f)+\sum_{i=1}^2N_0(r,\nu^\infty_{b_i})+\frac{2}{3}T(r)+S(r).
\end{align}
From (\ref{3.24}) and (\ref{3.25}), we have 
\begin{align*}
2T_0( r,f)\le& \overline N_0(r,\nu^0_f)+\overline N_0(r,\nu^0_{f-1})+\overline N_0(r,\nu^\infty_{f})+\overline N_0(r,\nu^0_{f-b_1})\\
&+\overline N_0(r,\nu^0_{f-b_2})+18T_0(r,b_1)+18T_0(r,b_2)+S(r).
\end{align*}
Then, the claim is proved.

Now, we return to Part A of the proof of the lemma. From (\ref{3.4})-(\ref{3.7}) and Claim \ref{3.8}, we get
$$2T_0(r,g)\le \sum_{i=1}^5\overline{N}_0(r,\nu^0_{g-a_i})+45\sum_{i=1,3}T_0(r,a_i)+47T_0(r,a_2)+19\sum_{i=4,5}T_0(r,a_i)+S(r).$$
Then, by the same arguments we have
$$2T_0(r,g)\le \sum_{i=1}^5\overline{N}_0(r,\nu^0_{g-a_i})+45\sum_{j=1,3}T_0(r,a_{i_j})+47T_0(r,a_{i_2})+19\sum_{j=4,5}T_0(r,a_{i_j})+S(r)$$
for any permutation $(i_1,\ldots,i_5)$ of $\{1,\ldots,5\}$. Summing-up both sides of the above inequalities over all such permutations, we obtain
\begin{align}\label{new1}
2T_0(r,g)\le \sum_{i=1}^5\overline{N}_0(r,\nu^0_{g-a_i})+35\sum_{i=1}^5T_0(r,a_i)+S(r).
\end{align}
Hence the lemma is proved for the case where all $a_i\not\equiv\infty$ for $i=1,\ldots,5$.

\noindent
\textbf{Part B.} Now, we consider the remaining case where there is a function among $\{a_1,\ldots,a_5\}$ equal to $\infty$, for instance $a_1\equiv\infty$. We set
$$ h=\frac{g-a_2}{g-a_3}, c_1=\frac{a_4-a_2}{a_4-a_3}, c_2=\frac{a_5-a_2}{a_5-a_3},c_3=0\text{ and }c_4=1.$$
By Lemma \ref{3.1}, we have
\begin{align}\label{3.26}
T_0(r,g)&\le T_0(r,h)+\sum_{i=2}^3T_0(r,a_i)+O(1),\\
\label{3.27}
T_0(r,c_1)&\le\sum_{i=2}^4T_0(r,a_i)+O(1),\\
\label{3.28}
T_0(r,c_2)&\le\sum_{i=2}^3T_0(r,a_i)+T_0(r,a_5)+O(1)
\end{align}
and
\begin{align}\label{3.29}
\begin{split}
\overline{N}_0(r,\nu^0_h)&+\overline{N}_0(r,\nu^\infty_h)+\overline{N}_0(r,\nu^0_{h-1})+\overline{N}_0(r,\nu^0_{h-c_1})+\overline{N}_0(r,\nu^0_{f-c_2})\\
&\le \sum_{i=1}^5\overline{N}_0(r,\nu^0_{g-a_i})+4T_0(r,a_2)+8T_0(r,a_3)\\
&+T_0(r,a_4)+T_0(r,a_5)+O(1).
\end{split}
\end{align}
Applying Claim \ref{3.8} for functions $h,c_1,c_2,c_3,c_4$, we get
$$2T_0(r,h)\le \overline{N}_0(r,\nu^\infty_{h})+\sum_{i=1}^4\overline{N}_0(r,\nu^0_{h-c_i})+18\sum_{i=1}^2T_0(r,c_i)+S(r).$$
Combining (\ref{3.26})-(\ref{3.29}) and the above inequality, we get
$$2T_0(r,g)\le\sum_{i=1}^5\overline{N}_0(r,\nu^0_{g-a_i})+42T(r,a_2)+46T(r,a_3)+19\sum_{i=4,5}T(r,a_i)+S(r).$$
Then, by the same arguments, we have
$$2T_0(r,g)\le \sum_{i=1}^5\overline{N}_0(r,\nu^0_{g-a_i})+42T(r,a_{i_2})+46T(r,a_{i_3})+19\sum_{j=4,5}T(r,a_{i_j})+S(r)$$
for any permutation $(i_2,\ldots,i_5)$ of $\{2,\ldots,5\}$. Summing-up both sides of the above inequalities over all such permutations, we obtain
\begin{align}\label{new2}
2T_0(r,g)\le \sum_{i=1}^5\overline{N}_0(r,\nu^0_{g-a_i})+\frac{33}{2}\sum_{i=2}^5T_0(r,a_i)+S(r).
\end{align}
This completes the proof of the lemma for the case where there is at least one function $a_i\in\{a_1,\ldots,a_5\}$ equal to $\infty$.

From the inequality (\ref{new1}) of Part A and the inequality (\ref{new2}) of Part B, we have the proof of the lemma.
\end{proof} 

We now prove the main result of the paper.

\begin{proof}[{\sc Proof of Theorem \ref{1.3}}]
Suppose contrarily that there exists an admissible meromorphic function $g$ on $\A(R_0)$ and mutually distinct admissible meromorphic functions $f_i\ (1\le i\le 5)$ on $\A(R_0)$ such that $f_i$ and $g$ share the set $S$ with multiplicity truncated to level $\ell$, where $g=f_1$. We note that
$$ P_S(w)=(w-a_1)\cdots (w-a_q).$$
We set $\Psi_i:= P_S(f_i)/P_S(g)\ (1\le i\le 5).$

\begin{claim}\label{3.30}
 One has $T_0(r,f_j)=O(T_0(r,g))+S_{f_j}(r)$ and $T_0(r,g)=O(T_0(r,f_j))+S_g(r)$, in particular $S_{f_j}(r)=S_g(r)$ for every $1\le j\le 5$.
\end{claim}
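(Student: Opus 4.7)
My plan is to deduce the claim directly from the Second Main Theorem applied to $f_j$ at the $q$ constants of $S$, combined with the sharing condition of Definition~\ref{1.2}, without yet invoking the finer structure of $\Psi_j$.

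First I apply the Second Main Theorem (Theorem~\ref{2.2}) to $f_j$ with the $q$ distinct constant values $a_1,\dots,a_q$, in its truncated form with $\overline N_0$ (which is obtained from Theorem~\ref{2.2} by transferring the ramification divisor of $f_j$ to the left-hand side, and is the same truncated level-$1$ form of the SMT on annuli already used in the proof of Lemma~\ref{3.3}):
\begin{equation*}
(q-2)\,T_0(r,f_j)\le \sum_{i=1}^q \overline N_0(r,\nu^0_{f_j-a_i})+S_{f_j}(r).
\end{equation*}

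Next I use the sharing condition. Because the constants $a_i$ are pairwise distinct, the divisors $\nu^0_{f_j-a_i}$ have pairwise disjoint supports as $i$ varies, so at each point $z$ the quantity $\sum_{i=1}^q\min\{1,\nu^0_{f_j-a_i}(z)\}$ equals $1$ when $f_j(z)\in S$ and $0$ otherwise. By Definition~\ref{1.2}, $\sum_{i=1}^q\min\{l,\nu^0_{f_j-a_i}\}=\sum_{i=1}^q\min\{l,\nu^0_{g-a_i}\}$ holds as divisors, and since $l\ge 1$ these divisors have the same support, namely $\{z:f_j(z)\in S\}=\{z:g(z)\in S\}$. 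Consequently
\begin{equation*}
\sum_{i=1}^q \overline N_0(r,\nu^0_{f_j-a_i})=\sum_{i=1}^q \overline N_0(r,\nu^0_{g-a_i})\le q\,T_0(r,g)+O(1),
\end{equation*}
the last inequality being the First Main Theorem applied to each $g-a_i$.

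Combining these two bounds yields $(q-2)T_0(r,f_j)\le qT_0(r,g)+S_{f_j}(r)+O(1)$. The standing hypothesis $q>\tfrac{(5k+3)l}{2l-175}$ (with $k\ge 1$ and $l\ge 88$, the latter forced by positivity of the denominator) gives $q\ge 5$, so $q-2>0$ and the first half of the claim follows. The sharing relation is symmetric in $f_j$ and $g$, so interchanging their roles in the same argument yields $T_0(r,g)=O(T_0(r,f_j))+S_g(r)$. Together these imply that $T_0(r,f_j)$ and $T_0(r,g)$ grow at the same rate, whence every error term of type $S_{f_j}(r)$ is of type $S_g(r)$ and vice versa.

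The only slightly delicate point is invoking the truncated form of the Second Main Theorem, which is not literally the statement of Theorem~\ref{2.2}. If one wishes to work strictly with the untruncated form, the excess $\sum_i\bigl(N_0(r,\nu^0_{f_j-a_i})-\overline N_0(r,\nu^0_{f_j-a_i})\bigr)$ is bounded by $N_0(r,\nu^0_{f_j'})\le T_0(r,f_j')\le 2T_0(r,f_j)+S_{f_j}(r)$ via the lemma on logarithmic derivatives, and the argument then gives $(q-4)T_0(r,f_j)\le qT_0(r,g)+S_{f_j}(r)$, which is still sufficient since $q\ge 5$.
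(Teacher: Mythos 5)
Your proof is correct and follows essentially the same route as the paper: apply the second main theorem on the annulus to one of the functions at the $q$ values of $S$, use the sharing condition (which, since the $a_i$ are distinct so the divisors $\nu^0_{f_j-a_i}$ have disjoint supports, forces $\sum_i\overline N_0(r,\nu^0_{f_j-a_i})=\sum_i\overline N_0(r,\nu^0_{g-a_i})$), bound the latter by $qT_0(\cdot)+O(1)$ via the first main theorem, and conclude by symmetry. Your remark about Theorem~\ref{2.2} being stated untruncated, with the fallback of absorbing the ramification excess into $2T_0(r,f_j)+S_{f_j}(r)$, is in fact a bit more careful than the paper, which directly writes the truncated inequality; either way the conclusion $S_{f_j}(r)=S_g(r)$ follows as you say.
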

Indeed, by Theorem \ref{2.2} we have
\begin{align*}
(q-2)T_0(r,g)&\le\sum_{i=1}^q\overline{N}_0(r,\nu^0_{g-a_i})+S_g(r)\\ 
&=\sum_{j=1}^q\overline{N}_0(r,\nu^0_{f_j-a_i})+S_g(r)\\
&\le qT_0(r,f_j)+S_g(r).
\end{align*}
Then, $T_0(r,g)=O(T_0(r,f_j))+S_g(r)$. Similarly $T_0(r,f_j)=O(T_0(r,g))+S_{f_j}(r)$ and hence $S_{f_j}(r)=S_g(r)\ (1\le j\le 5)$.

We set $\varphi_j=P'_S(f_j)f_j'/P_S(f_j)$ and $\varphi =\varphi_1$. By the definition of $\Psi_j$, we have
$$\varphi=\alpha_j+\varphi_j,\text{ where }\alpha_j=\frac{\Psi_j'}{\Psi_j}\ (1\le j\le 5).$$
By the lemma on logarithmic derivative, we have
$$ m_0(r,\varphi_j)=S_{P_S(f_j)}(r)=S_g(r)\ \forall 1\le j\le 5\ (\text{note that }T_0(r,P_S(f_j))=qT_0(r,f_j)).$$
Since each pole of $\varphi_j$ is a simple pole and must be either zero or pole of $P_S(f_j)$, we have
$$ N_0(r,\nu^\infty_{\varphi_j})=\overline{N}_0(r,\nu^0_{P_S(f_j)})+\overline{N}_0(r,\nu^\infty_{P_S(f_j)}) \le 2q T_0(r,f_j).$$
It yields that
$$T_0(r,\varphi_j)=m_0(r,\varphi_j)+N_0(r,\nu^\infty_{\varphi_j})\le 2q T_0(r,f_j)+S_g(r).$$
In particular $T_0(r,\varphi_j)=O(T_0(r,g))$. Moreover, by Theorem \ref{2.2} we have
\begin{align*}
N_0(r,\nu^\infty_{\varphi})&\ge \overline{N}_0(r,\nu^0_{P_S(g)})+\overline{N}_0(r,\nu^\infty_{P_S(g)})\\
&=\overline{N}_0(r,\nu^0_{P_S(f_j)})+\overline{N}_0(r,\nu^\infty_{P_S(f_j)})\\
&=\sum_{i=1}^q\overline{N}_0(r,\nu^0_{f_j-a_i})+q\overline{N}_0(r,\nu^\infty_{f_j})\\
&\ge (q-1)(T_0(r,f_j)+\overline{N}_0(r,\nu^\infty_{f_j}))+S_{f_j}(r).
\end{align*}
This implies that
\begin{align}\label{3.31}
(q-1)T_0(r,f_j)\le T_0(r,\varphi)-(q-1)\overline{N}_0(r,\nu^\infty_{f_j})+S_g(r).
\end{align}

Also, we note that
\begin{align*}
T_0(r,\alpha_j)&=m_0(r,\alpha_j)+N_0(r,\nu^\infty_{\alpha_j})\\
&\le m(r,\varphi)+m(r,\varphi_j)+\overline{N}_0(r,\nu^0_{\Psi_j})+\overline{N}_0(r,\nu^\infty_{\Psi_j})+O(1)\\
&=\overline{N}_0(r,\nu^0_{P_S(g)/P_S(f_j)})+\overline{N}_0(r,\nu^\infty_{P_S(g)/P_S(f_j)})+S_g(r)\\
&\le\sum_{i=1}^q\overline{N}_0(r,\nu^0_{f_j-a_i,\ge\ell})+S_g(r)\le \frac{q}{\ell}T_0(r,f_j)+S_g(r).
\end{align*}
This also yields that $T_0(r,\alpha_j)=O(T_0(r,g))$.

We now show that $\alpha_1,\ldots,\alpha_5$ are mutually distinct. Indeed, suppose contrarily that $\alpha_i=\alpha_j$  for some $i\ne j$. Then, there exists a nonzero constant $c_0$ with $\Psi_i=c_0\Psi_j$ and hence $c_0P_S(f_i)=P_S(f_j).$ This contradicts the assumption that $P_S(w)$ is a uniqueness polynomial for admissible meromorphic functions on $\A(R_0)$ and $f_i\ne f_j$.

Applying Theorem \ref{1.4} to the function $\varphi$ and functions $\alpha_1,\ldots,\alpha_5$, we obtain
$$2T_0(r,\varphi)\le\sum_{j=1}^5\overline{N}_0(r,\nu^0_{\varphi-\alpha_j})+35\sum_{j=1}^5T_0(r,\alpha_j)+S_g(r).$$
On the other hand, we have
$$ \overline{N}_0(r,\nu^0_{\varphi-\alpha_j})=\overline{N}_0(r,\nu^0_{\varphi_j})\le\overline{N}_0(r,\nu^0_{f_j'})+\sum_{t=1}^k\overline{N}_0(r,\nu^0_{f_j-e_t}), $$
where $e_1,\ldots,e_k$ are all of distinct zeros of $P'_S(w)$. Since $\overline{N}_0(r,\nu^0_{f_j-e_t})\le T_0(r,f_j)+O(1)$ and
\begin{align*}
\overline{N}_0(r,\nu^0_{f_j'})\le&T_0(r,f_j')+O(1)=m_0(r,f_j')+N_0(r,\nu^\infty_{f_j'})+O(1)\\ 
\le&m_0(r,f_j)+m_0\left(r,\frac{f_j'}{f_j}\right)+N_0(r,\nu^\infty_{f_j})+\overline{N}_0(r,\nu^\infty_{f_j})+O(1)\\
\le&T_0(r,f_j)+\overline{N}_0(r,\nu^\infty_{f_j})+S_g(r),
\end{align*}
we have
\begin{align}\label{3.32}
\sum_{j=1}^5\overline{N}_0(r,\nu^0_{\varphi-\alpha_j})\le (k+1)\sum_{j=1}^5T_0(r,f_j)+\sum_{j=1}^5\overline{N}_0(r,\nu^\infty_{f_j})+S_g(r).
\end{align}
Then, combining (\ref{3.31}) and (\ref{3.32}) we have
\begin{align*}
2(q-1)T_0(r,f_i)&\le 2T(r,\varphi)-2(q-1)\overline{N}_0(r,\nu^\infty_{f_i})+S_g(r)\\
&\le \left (k+1+\frac{35q}{\ell}\right)\sum_{j=1}^5T_0(r,f_j)+\sum_{j=1}^5\overline{N}_0(r,\nu^\infty_{f_j})\\
&\ \ \ -2(q-1)\overline{N}_0(r,\nu^\infty_{f_i})+S_g(r)\ \forall i=1,\ldots,5.
\end{align*}
Summing up this inequality over all $i=1,\ldots, 5$, we easily obtain
$$2(q-1)\sum_{i=1}^5T_0(r,f_i) \le 5\left (k+1+\frac{35q}{\ell}\right)\sum_{j=1}^5T_0(r,f_j)+S_g(r).$$
Letting $r\rightarrow R_0$ (outside the exceptional set), we get
$$2(q-1)\le 5\left (k+1+\frac{35q}{\ell}\right),\text{ i.e., }q\le\frac{(5k+7)\ell}{2\ell-175}.$$
This contradiction completes the proof of the theorem.
\end{proof}

\section*{Disclosure statement}
No potential conflict of interest was reported by the author(s).

\vskip0.2cm
{\footnotesize 
\noindent
{\sc Si Duc Quang}
\vskip0.05cm
\noindent
Department of Mathematics, Hanoi National University of Education,\\
136-Xuan Thuy, Cau Giay, Hanoi, Vietnam.
\vskip0.05cm
\noindent
\textit{E-mail}: quangsd@hnue.edu.vn

\end{document}